\newtheorem{theorem}{Theorem}[section]
\newtheorem{lemma}[theorem]{Lemma}
\newtheorem{corollary}[theorem]{Corollary}
\newtheorem{proposition}[theorem]{Proposition}
\newtheorem{definition}[theorem]{Definition}
\theoremstyle{remark}
\numberwithin{equation}{section}
\begin{document}

%
%
%
%
%
%
%
%
%

\title{Li-Yorke chaos translation set for linear operators}

\author[Bingzhe Hou]{Bingzhe Hou$^\alpha$}

\address{School of Mathematics , Jilin university, 130012, Changchun, P. R. China}

\email{houbz@jlu.edu.cn}

\thanks{$^\alpha$The first author was supported by the National Nature
Science Foundation of China (Grant No. 11001099)}

\author[Lvlin Luo]{Lvlin Luo$^{\beta*}$}

\address{School of Mathematics and Statistics, Xidian University, 710071, Xi'an, P. R. China}
\address{School of Mathematical Sciences, Fudan University, 200433, Shanghai, P. R. China}
\email{luoll12@mails.jlu.edu.cn}
\email{luolvlin@fudan.edu.cn}
\thanks{\hspace*{-0.8cm}$^\beta$The second author was supported by the National Natural Science Foundation of China (Grant No. 11626180).}
\thanks{\hspace*{-0.8cm}$^*$Corresponding author}

\subjclass{Primary 47A16; Secondary 37D45.}

\keywords{Li-Yorke chaos, translation set, shift, Kalisch operator.}

\begin{abstract}
In order to study Li-Yorke chaos by the scalar perturbation for a given bounded linear operator $T$ on Banach spaces $X$, we introduce the Li-Yorke chaos translation set of $T$, which is defined by
$S_{LY}(T)=\{\lambda\in\mathbb{C};\lambda+T \text{ is Li-Yorke chaotic}\}$.
In this paper, some operator classes are considered, such as normal operator, compact operator, shift and so on. In particular, we show that the Li-Yorke chaos translation set of Kalisch operator on Hilbert space $\mathcal{L}^2[0,2\pi]$ is a simple point set $\{0\}$.
\end{abstract}

\maketitle

\section{Introduction}

Let $Y$ be a metric space and $f: Y\rightarrow Y$ be a continuous map. Then the pair $(Y,f)$ is called a topological dynamic system, which is induced by the iteration
$$f^n=\underbrace{f\circ\cdots\circ f}\limits_{n},\qquad n\in\mathbb{N}.$$
Chaos theory is an interesting topic in the research of dynamical systems. And several kinds of chaos have been raised and well studied, such as Li-Yorke chaos, Devaney chaos, distributional chaos and so on. In particular, Li-Yorke chaos is firstly strictly defined by Li and Yorke in 1975 \cite{LY}.
\begin{definition}
Let $(Y,f)$ be a topological dynamic system. $\{x,y\}\subseteq Y$ is said to be a Li-Yorke chaotic pair, if
\begin{equation*}
\limsup\limits_{n\rightarrow+\infty}d(f^{n}(x),f^{n}(y))>0 \ \ and \ \
\liminf\limits_{n\rightarrow+\infty}d(f^{n}(x),f^{n}(y))=0.
\end{equation*}
A subset $\Gamma\subseteq Y$ is called a scrambled set, if each pair of two
distinct points in $\Gamma$ is a Li-Yorke chaotic pair. Furthermore, $f$ is called (densely) Li-Yorke chaotic, if there exists an
uncountable (densely) scrambled set.
\end{definition}

A topological dynamic system $(X,T)$ is said to be linear, if both the space $X$ and the map $T$ are linear. The research of chaos in linear dynamical system began with the work of G. Godefroy and J. Shapiro on Devaney chaotic operator in \cite{GS}. So far, chaotic operator have been extensively studied (we refer to the books \cite{Bay} and \cite{GEP}). In this paper, we focus on the Li-Yorke chaos in linear dynamical system in which T. Berm\'{u}dez et al. and N. Bernardes Jr. et al. had done excellent works in \cite{Ber} and \cite{Bern}. Especially, there are several equivalent descriptions of Li-Yorke chaos in linear dynamical system.

Throughout this paper, $\mathbb{T}=\{z\in\mathbb{C},|z|=1\}$,
$\mathbb{D}=\{z\in\mathbb{C},|z|<1\}$,
$X$ denotes an infinite-dimensional separable complex Banach space,
and $\mathcal{H}$ denotes a separable complex Hilbert space.
Moreover, $\mathcal{B}(X)$ denotes the set of all bounded linear operator on $X$,
and $\mathcal{B}(\mathcal{H})$ denotes the set of all bounded linear operator on $\mathcal{H}$.

\begin{definition}\label{irreg}
Given an operator $T\in\mathcal{B}(X)$ and a vector $x\in X$.
We say that $x$ is a semi-irregular vector for $T$, if the sequence $\{T^{n}x\}_{n\in\mathbb{N}}$ does not converge to zero but it has a subsequence converging to zero. We say that $x$ is an irregular vector for $T$, if the sequence $\{T^{n}x\}_{n\in\mathbb{N}}$ is unbounded but it has a subsequence converging to zero.
\end{definition}

\begin{theorem}\label{LYequiv}(\cite{Ber, Bern})
If $T\in\mathcal{B}(X)$, then the following assertions are equivalent.

(i) $T$ is Li-Yorke chaotic;

(ii) $T$ admits a Li-Yorke chaotic pair;

(iii) $T$ admits a semi-irregular vector;

(iv) $T$ admits an irregular vector.
\end{theorem}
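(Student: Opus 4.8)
The plan is to run the cycle (i) $\Rightarrow$ (ii) $\Rightarrow$ (iii) together with (iv) $\Rightarrow$ (iii) and the two substantive implications (iii) $\Rightarrow$ (iv) and (iii) $\Rightarrow$ (i); the first four are immediate and all the real work lies in manufacturing an irregular vector and an uncountable scrambled set out of a single semi-irregular one. For (i) $\Rightarrow$ (ii) one just picks two distinct points of an uncountable scrambled set. For (ii) $\Leftrightarrow$ (iii) I would use linearity: if $\{x,y\}$ is a Li-Yorke chaotic pair then $z=x-y$ satisfies $\|T^nz\|=d(T^nx,T^ny)$, so $\limsup_n\|T^nz\|>0$ and $\liminf_n\|T^nz\|=0$, i.e. $z$ is semi-irregular; conversely, if $x$ is semi-irregular then $\{x,0\}$ is a Li-Yorke chaotic pair since $d(T^nx,T^n0)=\|T^nx\|$. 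Finally (iv) $\Rightarrow$ (iii) is trivial, because an unbounded sequence cannot converge to $0$.

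The engine for the remaining implications is the elementary observation that a semi-irregular vector forces $T$ to fail power-boundedness: if $\sup_n\|T^n\|=:C<\infty$ and $\liminf_n\|T^nx\|=0$, then given $\varepsilon>0$ one chooses $k$ with $\|T^{k}x\|<\varepsilon/C$ and gets $\|T^nx\|\le C\|T^{k}x\|<\varepsilon$ for all $n\ge k$, so $T^nx\to0$ and $x$ is not semi-irregular. Rather than work in all of $X$ — where $\{x:\liminf_n\|T^nx\|=0\}$ need not be large, as $I\oplus 2B$ on $\mathbb{C}\oplus\ell^2$ shows — I would pass to the closed cyclic subspace $Y=\overline{\operatorname{span}}\{T^nx_0:n\ge0\}$ generated by a fixed semi-irregular vector $x_0$. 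This $Y$ is a $T$-invariant, separable, complete (hence Polish and perfect) space containing $x_0$, and since the orbit of $x_0$ is the same in $Y$ as in $X$, $x_0$ remains semi-irregular for $T|_Y$; by the observation above $T|_Y$ is not power-bounded, so $\sup_n\|T^n|_Y\|=\infty$. A standard uniform-boundedness/Baire argument then shows that $\{x\in Y:\sup_n\|T^nx\|=\infty\}$ is residual in $Y$.

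The key point is that $\liminf=0$ is also generic on this particular $Y$. Every vector of the dense subspace $\operatorname{span}\{T^nx_0\}$ has the form $p(T)x_0$ for a polynomial $p$, and along a subsequence $m_j$ with $T^{m_j}x_0\to0$ the continuity of $p(T)$ gives $T^{m_j}p(T)x_0=p(T)T^{m_j}x_0\to0$; hence $\liminf_n\|T^np(T)x_0\|=0$. Writing $\{x\in Y:\liminf_n\|T^nx\|=0\}=\bigcap_k G_k$ with $G_k=\{x\in Y:\|T^nx\|<1/k\text{ for some }n\}$ open, each $G_k$ contains this dense subspace and is therefore dense and open, so the set of vectors with $\liminf=0$ is residual in $Y$. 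Intersecting the two residual sets produces a residual — in particular nonempty — set $I$ of irregular vectors in $Y\subseteq X$, which gives (iv).

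To obtain (i) I would upgrade $I$ to an uncountable scrambled set via Mycielski's theorem. The subtraction map $s:Y\times Y\to Y$, $s(x,y)=x-y$, is a continuous linear surjection, hence open, so $R:=s^{-1}(I)$ is residual in the perfect Polish space $Y\times Y$ and is symmetric. Mycielski's theorem then yields a Cantor (hence uncountable) set $C\subseteq Y$ with $x-y\in I$ for all distinct $x,y\in C$; since each such $x-y$ is irregular it is in particular semi-irregular, so $\{x,y\}$ is a Li-Yorke chaotic pair and $C$ is scrambled, giving (i). I expect the main obstacle to be exactly the choice of the ambient space in which to run the category argument: the naive attempt inside $X$ fails because $\liminf=0$ can be non-generic there, and the decisive idea is that the cyclic subspace of $x_0$ is simultaneously power-unbounded (forcing $\sup=\infty$ generically) and swept by polynomial vectors that inherit the subsequential decay of $x_0$ (forcing $\liminf=0$ generically). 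Verifying that these two residual sets really coexist in one complete subspace, and checking the hypotheses of Mycielski's theorem, is where the care is needed.
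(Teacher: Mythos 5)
Your argument is correct, but note that the paper itself contains no proof of this theorem: it is quoted verbatim from \cite{Ber} and \cite{Bern}, so the only meaningful comparison is with those references. Your handling of the easy implications is right (including the small but necessary point that a semi-irregular vector is nonzero, so $\{x,0\}$ is a genuine pair), and your proof of (iii) $\Rightarrow$ (iv) is essentially the argument of \cite{Bern}: pass to the cyclic subspace $Y=\overline{{\rm span}}\{T^nx_0:n\geq0\}$, observe that the semi-irregular vector $x_0$ rules out power-boundedness of $T|_Y$, conclude by Banach--Steinhaus that $\{x\in Y:\sup_n\|T^nx\|=\infty\}$ is residual, and note that $\{x\in Y:\liminf_n\|T^nx\|=0\}$ is residual because the dense set of polynomial vectors $p(T)x_0$ inherits the subsequential decay of $x_0$ via $T^{m_j}p(T)x_0=p(T)T^{m_j}x_0\to0$; the intersection yields an irregular vector. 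Where you genuinely diverge is (iii) $\Rightarrow$ (i): invoking Mycielski's theorem is legitimate ($Y$ is a perfect Polish space, and subtraction is a continuous open surjection, so the preimage of the residual set of irregular vectors is residual in $Y\times Y$), but it is heavy machinery for a step that linearity trivializes. If $x_0$ is semi-irregular, then for $\lambda\neq\mu$ one has $d(T^n(\lambda x_0),T^n(\mu x_0))=|\lambda-\mu|\,\|T^nx_0\|$, so the uncountable set $\{\lambda x_0:\lambda\in\mathbb{C}\}$ is already scrambled; this one-line observation is how the cited papers obtain (iii) $\Rightarrow$ (i), and with it the Baire argument is needed only for the equivalence with (iv), not for chaos itself. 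What your Mycielski route buys in exchange is a strictly stronger conclusion --- a Cantor scrambled set consisting of differences of irregular (not merely semi-irregular) vectors, which is the kind of refinement \cite{Bern} deploys for dense Li-Yorke chaos --- so the extra machinery is not wasted, just unnecessary for the stated theorem.
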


\begin{definition}\label{LYC}(\cite{Bern})
Let $T\in\mathcal{B}(X)$. We say that $T$ satisfies the Li-Yorke Chaos Criterion if there exists a subset $X_0$ of $X$ with the following properties.

(a) $(T^{n}x)$ has a subsequence converging to zero, for every $x\in X_0$.

(b) There is a bounded sequence $(a_n)$ in $\overline{{\rm span}(X_0)}$ such that the sequence $(T^{n}a_n)$ is unbounded.
\end{definition}
\begin{theorem}\label{LYequivC}\cite{Ber, Bern}
If $T\in\mathcal{B}(X)$, then the following assertions are equivalent.

(i) $T$ is Li-Yorke chaotic;

(ii) $T$ satisfies the Li-Yorke Chaos Criterion.
\end{theorem}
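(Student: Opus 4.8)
The plan is to use Theorem~\ref{LYequiv}, which reduces Li-Yorke chaos to the existence of an irregular vector; thus it suffices to link the Li-Yorke Chaos Criterion to such a vector. The implication (i)$\Rightarrow$(ii) is immediate: if $T$ is Li-Yorke chaotic then by Theorem~\ref{LYequiv} it has an irregular vector $x$, and taking $X_0=\{x\}$ together with the constant sequence $a_n=x$ verifies (a) (the orbit of $x$ has a subsequence converging to $0$) and (b) (the bounded sequence $(x)$ in $\overline{{\rm span}(X_0)}={\rm span}\{x\}$ has $(T^nx)$ unbounded). So the whole content is the converse (ii)$\Rightarrow$(i), which I would prove by producing an irregular vector inside the closed subspace $Y:=\overline{{\rm span}(X_0)}$ through a Baire category argument.

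For (ii)$\Rightarrow$(i), regard $Y$ as a complete metric space and consider the two sets $A=\{y\in Y:\liminf_n\|T^ny\|=0\}$ and $B=\{y\in Y:\sup_n\|T^ny\|=\infty\}$; any point of $A\cap B$ is an irregular vector for $T$, so by Baire's theorem it is enough to show that both $A$ and $B$ are residual in $Y$. For $B$ I would use (b): the bounded sequence $(a_n)\subseteq Y$ with $(T^na_n)$ unbounded forces $\sup_n\|T^n|_Y\|=\infty$ (norms of the restrictions to $Y$), and then the Banach--Steinhaus dichotomy gives at once that $\{y\in Y:\sup_n\|T^ny\|=\infty\}$ is comeager in $Y$. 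For $A$ I would write $A=\bigcap_{m\ge1}U_m$ with $U_m=\bigcup_n\{y\in Y:\|T^ny\|<1/m\}$; each $U_m$ is open, and one uses (a) to show it is dense: every $w\in{\rm span}(X_0)$ is a finite linear combination of vectors whose orbits have subsequences tending to $0$, so the orbit of $w$ should come arbitrarily close to $0$, giving $w\in U_m$; since ${\rm span}(X_0)$ is dense in $Y$ and $U_m$ is open, $U_m$ is then dense. Intersecting, $A\cap B$ is residual and in particular nonempty.

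The step I expect to be the main obstacle is precisely the density of the sets $U_m$, i.e. extracting from (a) a single time at which a given finite combination $w\in{\rm span}(X_0)$ has small orbit. Property (a) only supplies, for each individual generator, its own subsequence of times along which its orbit tends to $0$, and these subsequences need not overlap; so to conclude $\liminf_n\|T^nw\|=0$ for the combination $w$ one must coordinate these times --- most cleanly by reading (a) as providing a common sequence $(n_k)$ with $T^{n_k}x\to0$ for every $x\in X_0$, along which every element of ${\rm span}(X_0)$ then tends to $0$. An alternative, avoiding category, is a direct interlacing construction of $z=\sum_k\varepsilon_k c_k$: choose finite combinations $c_k\in{\rm span}(X_0)$ approximating the witnesses $a_{n_k}$ well enough (relative to $\|T^{n_k}\|$) that $\|T^{n_k}c_k\|\to\infty$ while $\|c_k\|$ stays bounded, then fix the scalars $\varepsilon_k$ and the large times inductively so that at each retained time the designated block dominates the finitely many earlier blocks and the rapidly decaying tail, yielding $\limsup_n\|T^nz\|=\infty$, while the coordinated zero-times furnish the subsequence with $T^nz\to0$. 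Either way the heart of the matter is this simultaneous control of all blocks at the chosen times.
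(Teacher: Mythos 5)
Your skeleton is the right one --- and it is in substance the proof of the cited sources (this paper states Theorem~\ref{LYequivC} with a citation to \cite{Ber, Bern} and gives no proof of its own): (i)$\Rightarrow$(ii) via an irregular vector with $X_0=\{x\}$, $a_n=x$ is correct and exactly as expected, and for (ii)$\Rightarrow$(i) the standard argument is precisely your Baire scheme in $Y=\overline{{\rm span}(X_0)}$, with $B=\{y:\sup_n\|T^ny\|=\infty\}$ residual by the Banach--Steinhaus dichotomy (note $(a_n)$ bounded with $(T^na_n)$ unbounded forces $\sup_n\|T^n|_Y\|=\infty$, and $Y$, being closed in $X$, is complete even though it need not be $T$-invariant --- the restrictions are viewed as maps $Y\to X$, which is all the uniform boundedness principle needs).

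However, the gap you honestly flag --- density of the open sets $U_m$ --- is a genuine gap as your proposal stands, and neither of your two remedies closes it. Reading (a) as providing a single sequence $(n_k)$ with $T^{n_k}x\to0$ simultaneously for all $x\in X_0$ is a strengthening of the stated hypothesis, not a proof from it; and your interlacing construction of $z=\sum_k\varepsilon_kc_k$ again appeals at the end to ``coordinated zero-times,'' i.e., assumes exactly the coordination that is missing. The one-line idea you need is a dichotomy supplied by Theorem~\ref{LYequiv}(iii): if some $x\in X_0$ does \emph{not} satisfy $T^nx\to0$, then by (a) its orbit has a null subsequence without converging to zero, so this $x$ is itself a semi-irregular vector and $T$ is Li-Yorke chaotic --- you are done before any category argument. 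Hence you may assume that $T^nx\to0$ along the \emph{full} sequence for every $x\in X_0$; then $T^nw\to0$ for every finite combination $w\in{\rm span}(X_0)$, so each $U_m$ contains this dense subspace and is dense, $A=\bigcap_mU_m$ is residual, and $A\cap B\neq\emptyset$ yields an irregular vector in $Y$, whence $T$ is Li-Yorke chaotic by Theorem~\ref{LYequiv}(iv)$\Rightarrow$(i). With this single observation inserted, your argument is complete and coincides with the proof in \cite{Ber, Bern}.
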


Let $T\in\mathcal{B}(X)$, our aim is to study the Li-Yorke chaos of $T$ plusing a scalar. So we introduce the concept of Li-Yorke chaos translation set.
\begin{definition}\label{lycts}
For $T\in\mathcal{B}(X)$,
the Li-Yorke chaos translation set of $T$, denoted by $S_{LY}(T)$, is defined by
$$
S_{LY}(T)=\{\lambda\in\mathbb{C};\lambda+T \text{ is Li-Yorke chaotic}\}.
$$
\end{definition}
Replacing Li-Yorke chaos by some other dynamical property $\mathfrak{P}$, we can also define $\mathfrak{P}$ translation set of $T$, such as distributional chaos, hypercyclicity, frequent hypercyclicity and so on. In particular, if $\mathfrak{P}$ means  non-invertibility, the  non-invertibility translation set of $T$ coincide with the spectrum of $T$.

In this paper, we discuss the Li-Yorke chaos translation set for a given bounded linear operator. Some operator classes are considered, such as normal operator, compact operator, shift and so on. In particular, we show that the Li-Yorke chaos translation set of Kalisch operator on Hilbert space $\mathcal{L}^2[0,2\pi]$ is a simple point set $\{0\}$.

\section{Basic properties and some examples}

In this section, we will study the properties of the Li-Yorke chaos translation set and show some examples.
\begin{proposition}\label{basicp1}
If $T\in\mathcal{B}(X)$,
then $S_{LY}(T)$ is bounded.
\end{proposition}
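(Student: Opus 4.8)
The plan is to bound $S_{LY}(T)$ by showing that once $|\lambda|$ exceeds $\|T\|+1$, the operator $\lambda+T$ becomes expansive enough to preclude any semi-irregular vector, so that by the equivalence (i)$\Leftrightarrow$(iii) of Theorem \ref{LYequiv} it fails to be Li-Yorke chaotic. The first step is an elementary norm estimate: for every $x\in X$,
\begin{equation*}
\|(\lambda+T)x\|\geq|\lambda|\,\|x\|-\|Tx\|\geq(|\lambda|-\|T\|)\,\|x\|,
\end{equation*}
so whenever $|\lambda|>\|T\|$ the operator $\lambda+T$ is bounded below by the constant $c:=|\lambda|-\|T\|>0$. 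Iterating this inequality gives $\|(\lambda+T)^{n}x\|\geq c^{n}\|x\|$ for all $n\in\mathbb{N}$ and all $x\in X$.

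The second step is to restrict attention to $|\lambda|>\|T\|+1$, so that $c>1$. For any nonzero $x$ the bound $\|(\lambda+T)^{n}x\|\geq c^{n}\|x\|\to+\infty$ shows that no subsequence of $\{(\lambda+T)^{n}x\}_{n}$ can converge to zero; while for $x=0$ the orbit is identically zero and hence converges to zero, contradicting the requirement in Definition \ref{irreg} that a semi-irregular vector not converge to zero. In either case $\lambda+T$ admits no semi-irregular vector.

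Finally, I would invoke Theorem \ref{LYequiv}: the absence of a semi-irregular vector means $\lambda+T$ is not Li-Yorke chaotic, so $\lambda\notin S_{LY}(T)$ whenever $|\lambda|>\|T\|+1$. Therefore $S_{LY}(T)\subseteq\{\lambda\in\mathbb{C}:|\lambda|\leq\|T\|+1\}$, which is bounded. There is no serious obstacle in this argument; the only point needing a little care is the recognition that being bounded below by a constant strictly greater than $1$ is precisely the condition forcing every nonzero orbit to grow geometrically, which is what rules out the subsequential decay to zero that a semi-irregular (equivalently, irregular) vector requires.
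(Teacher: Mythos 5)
Your proof is correct, and it is exactly the argument the paper has in mind: the paper states this proposition without proof as ``trivial,'' and it uses the very same estimate $\|(\lambda+T)x\|\geq(|\lambda|-\|T\|)\|x\|$ in the proof of Proposition \ref{shift} to rule out Li-Yorke chaos for large $|\lambda|$. One small remark: the lower bound $c\geq 1$ already suffices, since then $\|(\lambda+T)^n x\|\geq\|x\|>0$ for $x\neq 0$ blocks any subsequence converging to zero, so you do not actually need the strict geometric growth from $c>1$.
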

\begin{proposition}\label{basicp2}
If $T_1,T_2\in\mathcal{B}(X)$,
then
$$
S_{LY}(T_1\oplus T_2)=S_{LY}(T_1)\cup S_{LY}(T_2).
$$
\end{proposition}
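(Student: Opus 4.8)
The plan is to reduce the claim to a statement purely about direct sums and then exploit the irregular-vector characterization of Li-Yorke chaos from Theorem \ref{LYequiv}. Since scalar translation commutes with the direct sum decomposition, we have the identity $\lambda + (T_1 \oplus T_2) = (\lambda + T_1) \oplus (\lambda + T_2)$ for every $\lambda \in \mathbb{C}$. Hence $\lambda \in S_{LY}(T_1 \oplus T_2)$ if and only if $(\lambda + T_1) \oplus (\lambda + T_2)$ is Li-Yorke chaotic, and it suffices to prove the general fact that for operators $S_1$ and $S_2$ the operator $S_1 \oplus S_2$ is Li-Yorke chaotic if and only if $S_1$ or $S_2$ is. Throughout I would equip the direct sum with the norm $\|(u,v)\| = \max\{\|u\|, \|v\|\}$ (any equivalent product norm works equally well), so that a sequence $(u_n, v_n)$ converges to zero exactly when both coordinate sequences do, and is bounded exactly when both coordinate sequences are bounded.

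For the inclusion $S_{LY}(T_1)\cup S_{LY}(T_2)\subseteq S_{LY}(T_1\oplus T_2)$, suppose without loss of generality that $S_1$ is Li-Yorke chaotic. By Theorem \ref{LYequiv} it admits an irregular vector $x$, that is, $\{S_1^{n}x\}$ is unbounded but has a subsequence converging to zero. Then the lifted vector $(x,0)$ satisfies $(S_1 \oplus S_2)^{n}(x,0) = (S_1^{n}x, 0)$, whose norm equals $\|S_1^{n}x\|$; thus $(x,0)$ is an irregular vector for $S_1 \oplus S_2$, and the sum is Li-Yorke chaotic. The symmetric argument handles the case where $S_2$ is chaotic.

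For the reverse inclusion, suppose $S_1 \oplus S_2$ is Li-Yorke chaotic and let $(x,y)$ be an irregular vector, so that $(S_1^{n}x, S_2^{n}y)$ is unbounded while some subsequence $(S_1^{n_k}x, S_2^{n_k}y)$ tends to zero. The convergence of the subsequence forces both $S_1^{n_k}x \to 0$ and $S_2^{n_k}y \to 0$. On the other hand, the unboundedness of the pair forces at least one of the sequences $\{S_1^{n}x\}$, $\{S_2^{n}y\}$ to be unbounded; say it is $\{S_1^{n}x\}$. Then $x$ is simultaneously unbounded under $S_1$ and admits the subsequence $S_1^{n_k}x \to 0$, so $x$ is an irregular vector for $S_1$, whence $S_1$ is Li-Yorke chaotic by Theorem \ref{LYequiv}. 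This establishes the equivalence and the proposition follows.

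The only point requiring care, and the closest thing to an obstacle, is the asymmetry in how the two defining conditions of an irregular vector pass to the coordinates: convergence of a subsequence to zero descends to \emph{both} coordinates simultaneously, whereas unboundedness descends only to \emph{at least one} coordinate, and we cannot prescribe which one in advance. The crucial observation is that these two features are compatible, since the very subsequence realizing convergence to zero also serves as the zero-subsequence for whichever coordinate happens to be unbounded.
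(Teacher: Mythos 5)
Your proof is correct. The paper offers no proof of this proposition, dismissing it (together with Proposition \ref{basicp1}) as trivial, and your argument via the irregular-vector characterization of Theorem \ref{LYequiv} --- including the careful handling of the asymmetry between the ``subsequence to zero'' condition (which passes to both coordinates) and unboundedness (which passes to at least one, with the same subsequence serving for whichever coordinate is unbounded) --- is exactly the routine verification the authors had in mind.
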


The above two properties of Li-Yorke chaos translation set of bounded linear operator are trivial, and are the same as the ones of spectrum. However, we will show that there are several differences between them. This is the reason of that we use the terminology "Li-Yorke chaos translation set" but not "Li-Yorke chaos spectrum". One difference is as follows.
\begin{proposition}\label{basicp3}
Let $T\in\mathcal{B}(X)$.
Then for any $\lambda\in \mathbb{C}$,
$$
S_{LY}(\lambda+T)=S_{LY}(T)-\lambda.
$$
\end{proposition}
Another difference is that it is possible to be empty of the Li-Yorke chaos translation set,
but the spectrum of a bounded linear operator is always nonempty,
which is a direct result of normal operators being not Li-Yorke chaotic (\cite{HTS, Ber}).
\begin{proposition}\label{normal}
If $T\in\mathcal{B}(\mathcal{H})$ is a normal operator, then $S_{LY}(T)=\emptyset$.
\end{proposition}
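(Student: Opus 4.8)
The plan is to reduce the statement to the single fact that a normal operator cannot be Li-Yorke chaotic, and then to establish that fact via the spectral theorem. First I would observe that translation preserves normality: if $TT^*=T^*T$, then for every $\lambda\in\mathbb{C}$ one checks directly that
$$
(\lambda+T)(\lambda+T)^*=|\lambda|^2+\lambda T^*+\bar\lambda T+TT^*=(\lambda+T)^*(\lambda+T),
$$
since the cross terms $\lambda T^*+\bar\lambda T$ coincide and $TT^*=T^*T$. Hence $N:=\lambda+T$ is again normal, and it suffices to show that an arbitrary normal operator $N$ admits no semi-irregular vector: by Theorem \ref{LYequiv} the absence of semi-irregular vectors is equivalent to the failure of Li-Yorke chaos, and as $\lambda$ ranges over $\mathbb{C}$ this gives $S_{LY}(T)=\emptyset$.

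Next I would invoke the spectral theorem. Writing $N=\int_{\sigma(N)}z\,dE(z)$ for the spectral measure $E$ and fixing $x\in\mathcal{H}$, the scalar measure $\mu_x(\cdot)=\langle E(\cdot)x,x\rangle$ yields
$$
\|N^n x\|^2=\int_{\sigma(N)}|z|^{2n}\,d\mu_x(z)=\int_{[0,\infty)}t^n\,d\nu(t),
$$
where $\nu$ is the push-forward of $\mu_x$ under $z\mapsto|z|^2$. Thus $a_n:=\|N^n x\|^2$ is a Stieltjes moment sequence, and the decisive structural feature is that such a sequence is log-convex: a single application of the Cauchy--Schwarz inequality to $t^n=t^{(n-1)/2}\,t^{(n+1)/2}$ gives $a_n^2\le a_{n-1}a_{n+1}$.

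Finally I would exploit log-convexity to exclude semi-irregularity. After discarding the trivial cases $x=0$ and $\mu_x=\|x\|^2\delta_0$ (where $N^n x\to 0$ outright), every $a_n$ is strictly positive, so $b_n:=\log a_n$ is a well-defined convex sequence; its increments $b_{n+1}-b_n$ are non-decreasing and therefore converge to some $L\in(-\infty,+\infty]$. If $L>0$ then $b_n\to+\infty$ and $\|N^n x\|\to\infty$; if $L\le 0$ then the increments are eventually non-positive, so $b_n$ is non-increasing and hence convergent, possibly to $-\infty$. In either regime $(\|N^n x\|)$ is eventually monotone, so it cannot simultaneously fail to converge to zero and admit a subsequence converging to zero. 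Consequently $N$ has no semi-irregular vector. The main obstacle is exactly the borderline case in which $\mu_x$ charges the unit circle $\mathbb{T}$, where $|z|^{2n}$ neither grows nor decays; it is the log-convexity (equivalently, eventual monotonicity) of $a_n$, rather than any crude partition of $\sigma(N)$ into the regions $|z|<1$, $|z|=1$, $|z|>1$, that rules out the oscillation a semi-irregular vector would require.
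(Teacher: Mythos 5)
Your proof is correct, and it takes a genuinely more self-contained route than the paper. The reduction step is identical: you both observe that $\lambda+T$ is normal whenever $T$ is, so everything rests on the single fact that a normal operator is never Li-Yorke chaotic. But at that point the paper simply cites this fact from \cite{HTS, Ber}, whereas you prove it: via the spectral theorem you identify $a_n=\|N^nx\|^2$ with a Stieltjes moment sequence, derive the log-convexity $a_n^2\le a_{n-1}a_{n+1}$ by Cauchy--Schwarz, and conclude that $(\|N^nx\|)$ is eventually monotone --- which is incompatible with the existence of a semi-irregular vector, so Theorem \ref{LYequiv} (the equivalence of (i) and (iii)) finishes the argument. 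All the details check out, including the degenerate cases $x=0$ and $\mu_x=\|x\|^2\delta_0$ that guarantee $a_n>0$, and the dichotomy on the limit $L$ of the non-decreasing increments of $\log a_n$ (blow-up for $L>0$, a non-increasing hence convergent sequence for $L\le 0$). One remark worth making: the log-convexity does not need the full spectral theorem, since for any hyponormal $T$ one has $\|T^nx\|^2=\langle T^*T^nx,\,T^{n-1}x\rangle\le\|T^*T^nx\|\,\|T^{n-1}x\|\le\|T^{n+1}x\|\,\|T^{n-1}x\|$, and this operator-theoretic Cauchy--Schwarz is essentially how the cited references prove the lemma. Since $\lambda+T$ is hyponormal whenever $T$ is, replacing your measure-theoretic step by this one line would upgrade your argument, at no extra cost, to the stronger conclusion that $S_{LY}(T)=\emptyset$ for every hyponormal $T$. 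In short: the paper buys brevity by citation; your version buys self-containedness, and is one small modification away from extra generality.
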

\begin{proof}
Since $T$ is a normal operator, then for every $\lambda\in \mathbb{C}$,
$\lambda+T$ is also a normal operator and hence is not Li-Yorke chaotic. Thus, $S_{LY}(T)=\emptyset$.
\end{proof}

In \cite{Hbz}, the authors studied chaos for a family of
operators given by Cowen and Douglas \cite{Cowen}.
\begin{definition}
For $\Omega$ a connected open subset of $\mathbb{C}$ and $n$ a
positive integer, let $B_{n}(\Omega)$ denotes the
operators $T$ in $\mathcal {B}(\mathcal {H})$ which satisfy:

(a) $\Omega \subseteq \sigma(T)=\{\omega \in \mathbb{C}: T-\omega
 {\ not \ invertible}\}$;

(b) ${\rm Ran}(T-\omega)=\mathcal {H} \ for \ \omega \ in \ \Omega$;

(c) $\bigvee {\rm Ker}_{\omega\in \Omega}(T-\omega)=\mathcal {H}$; and

(d) ${\rm dim} \ {\rm Ker}(T-\omega)=n$ for $\omega$ in $\Omega$.
\end{definition}
Obviously, hypercyclicity implies Li-Yorke chaos.
\begin{lemma}\label{cd}(\cite{Hbz})
Let $T\in B_n(\Omega)$. If $\Omega\cap\mathbb{T}\neq\emptyset$, then $T$ is Li-Yorke chaotic (indeed, it is mixing and Devaney chaotic).
\end{lemma}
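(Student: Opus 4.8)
The plan is to invoke the eigenvector criterion of Godefroy and Shapiro \cite{GS}, which produces not merely Li-Yorke chaos but topological mixing and Devaney chaos directly from a supply of eigenvectors whose eigenvalues straddle the unit circle. Concretely, if the linear spans of $\{{\rm Ker}(T-\omega):|\omega|<1\}$ and $\{{\rm Ker}(T-\omega):|\omega|>1\}$ are both dense in $\mathcal{H}$, then $T$ is mixing; if in addition the span of $\{{\rm Ker}(T-\omega):\omega^{k}=1\ \text{for some}\ k\geq1\}$ is dense, then $T$ is Devaney chaotic. So the whole argument reduces to producing enough eigenvectors and verifying these three density statements, after which Li-Yorke chaos follows a fortiori since mixing (indeed hypercyclicity) implies it.

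First I would fix a point $\omega_0\in\Omega\cap\mathbb{T}$ and use that $\Omega$ is open to choose $r>0$ with the disk $D(\omega_0,r)\subseteq\Omega$. Because $\omega_0$ lies on the unit circle, this disk meets both $\mathbb{D}$ and $\mathbb{C}\setminus\overline{\mathbb{D}}$, so the open sets $U_-=D(\omega_0,r)\cap\mathbb{D}$ and $U_+=D(\omega_0,r)\cap(\mathbb{C}\setminus\overline{\mathbb{D}})$ are nonempty. Each $\omega\in U_-$ is an eigenvalue of modulus $<1$ with a nontrivial eigenspace by (a) and (d), and likewise every $\omega\in U_+$ gives an eigenvalue of modulus $>1$; moreover the relatively open arc $\Omega\cap\mathbb{T}$ contains infinitely many roots of unity, which will supply the periodic eigenvectors.

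The technical heart is the following density claim: for any subset $S\subseteq\Omega$ possessing an accumulation point in $\Omega$, the eigenvectors $\bigvee_{\omega\in S}{\rm Ker}(T-\omega)$ span a dense subspace of $\mathcal{H}$. Here I would use the defining feature of the Cowen-Douglas class \cite{Cowen}: there exist holomorphic sections $\gamma_1,\dots,\gamma_n:\Omega\to\mathcal{H}$ with $\{\gamma_1(\omega),\dots,\gamma_n(\omega)\}$ a basis of ${\rm Ker}(T-\omega)$ for every $\omega\in\Omega$. If a vector $f\in\mathcal{H}$ were orthogonal to $\bigvee_{\omega\in S}{\rm Ker}(T-\omega)$, then each map $\omega\mapsto\langle\gamma_i(\omega),f\rangle$ would be holomorphic on $\Omega$ and vanish on $S$; since $S$ accumulates in $\Omega$ and $\Omega$ is connected, the identity theorem forces these maps to vanish on all of $\Omega$, whence $f\perp\bigvee_{\omega\in\Omega}{\rm Ker}(T-\omega)=\mathcal{H}$ by (c), i.e. $f=0$.

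Applying this density claim to $S=U_-$, to $S=U_+$, and to the infinite set of roots of unity contained in $\Omega\cap\mathbb{T}$ (which accumulates in $\Omega$) verifies the three hypotheses of the Godefroy-Shapiro criterion, yielding that $T$ is mixing and Devaney chaotic. I expect the density claim to be the main obstacle, since it is the only place where the holomorphic structure of $B_n(\Omega)$ genuinely enters; the existence of the holomorphic frame must be quoted carefully, but once it and the identity theorem are in hand the remainder is a direct appeal to the known eigenvalue criterion.
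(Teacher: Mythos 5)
Your proposal is correct and takes essentially the same route as the proof in the cited source \cite{Hbz} (the present paper states the lemma without proof): there, too, the Godefroy--Shapiro eigenvector criterion is verified by combining the holomorphic eigenvector sections of the Cowen--Douglas class with the identity theorem and axiom (c) to get density of the spans of eigenvectors with eigenvalues in $U_-$, in $U_+$, and at the roots of unity on the arc $\Omega\cap\mathbb{T}$. The one point you rightly flag --- that \cite{Cowen} guarantees holomorphic frames only locally --- is harmless, since the identity-theorem propagation can be run through overlapping local frames (vanishing of $\langle\gamma_i(\omega),f\rangle$ is frame-independent), or one invokes triviality of holomorphic bundles over the planar domain $\Omega$ to get a global frame.
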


N. Bernardes Jr. et al. gave a necessary condition for densely Li-Yorke chaos in \cite{Bern}. Densely Li-Yorke chaos is strictly stronger than Li-Yorke chaos. However, inspired by this result, we give a necessary condition for Li-Yorke chaos.
\begin{lemma}\label{ndLY}(\cite{Bern})
Let $T\in\mathcal{B}(X)$.
If $T^*$ has an eigenvalue $\gamma$ with $|\gamma|\geq1$, then $T$ is not densely Li-Yorke chaotic.
\end{lemma}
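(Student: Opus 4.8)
The plan is to exploit the eigenfunctional of $T^*$ to produce a continuous linear constraint that every Li-Yorke pair of $T$ must satisfy, and then to observe that this constraint is incompatible with the density of a scrambled set. The hypothesis $|\gamma|\geq 1$ will enter at exactly one point, where it guarantees a lower bound that cannot decay.

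First I would record the eigenvalue relation in dual form. Since $\gamma$ is an eigenvalue of $T^*$, choose a nonzero eigenfunctional $\phi\in X^*$ with $T^*\phi=\gamma\phi$. Unwinding the definition of the adjoint, this says $\phi(Tx)=\gamma\phi(x)$ for every $x\in X$, and iterating gives $\phi(T^{n}x)=\gamma^{n}\phi(x)$ for all $x\in X$ and all $n\in\mathbb{N}$. The crucial features of $\phi$ are that it is nonzero, continuous, and (as a nonzero scalar-valued functional) surjective onto $\mathbb{C}$.

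Next I would extract the constraint on Li-Yorke pairs. Suppose $\{x,y\}$ is a Li-Yorke chaotic pair for $T$. By linearity $T^{n}x-T^{n}y=T^{n}(x-y)$, so the condition $\liminf_{n}\|T^{n}(x-y)\|=0$ furnishes a subsequence $(n_{k})$ with $T^{n_{k}}(x-y)\to 0$. Applying the continuous functional $\phi$ and using the relation above yields $\gamma^{n_{k}}\phi(x-y)=\phi(T^{n_{k}}(x-y))\to 0$. However $|\gamma^{n_{k}}\phi(x-y)|=|\gamma|^{n_{k}}|\phi(x-y)|\geq|\phi(x-y)|$, precisely because $|\gamma|\geq 1$. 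A sequence bounded below by the fixed number $|\phi(x-y)|$ can tend to $0$ only if $\phi(x-y)=0$, so we conclude $\phi(x)=\phi(y)$ for every Li-Yorke pair.

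Finally I would derive the contradiction with density. If $T$ were densely Li-Yorke chaotic, there would exist a dense scrambled set $\Gamma$; since every pair of distinct points of $\Gamma$ is a Li-Yorke pair, the previous step shows $\phi$ is constant on $\Gamma$, say $\phi\equiv c$ there. Hence $\Gamma$ is contained in the affine hyperplane $\{x\in X:\phi(x)=c\}$, which is a proper closed subset of $X$ because $\phi$ is continuous, nonzero and surjective. A proper closed set cannot be dense, contradicting the density of $\Gamma$; therefore $T$ is not densely Li-Yorke chaotic. I expect the only delicate point to be the transfer from the metric $\liminf$ condition to the functional statement, which rests entirely on the continuity of $\phi$; the rest is elementary, and the role of the hypothesis $|\gamma|\geq 1$ is transparent, since for $|\gamma|<1$ the lower bound would collapse and the argument would fail.
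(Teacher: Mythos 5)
Your proof is correct, and it is essentially the canonical argument: the paper itself states this lemma without proof (citing \cite{Bern}), but your key step --- applying an eigenfunctional $\phi$ with $T^*\phi=\gamma\phi$, $|\gamma|\geq 1$, to get $|\phi(T^{n}u)|=|\gamma|^{n}|\phi(u)|\geq|\phi(u)|$, hence $\phi(x)=\phi(y)$ for every Li-Yorke pair --- is exactly the device the paper uses in its proof of Proposition \ref{nLY}, and your conclusion that a dense scrambled set would be trapped in the proper closed affine hyperplane $\{x:\phi(x)=c\}$ matches the argument in the cited source, where semi-irregular vectors are trapped in $\ker\phi$. No gaps: the continuity and nonvanishing of $\phi$ are all that is needed, and you use only the $\liminf$ half of the Li-Yorke condition, which is legitimate.
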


\begin{proposition}\label{nLY}
Let $T\in\mathcal{B}(X)$.
If
$$
X^*=\overline{{\rm span}\{\varphi\in X^*; \ T^*\varphi=\gamma\varphi \ for \ some \ \gamma\in\mathbb{C} \ with \ |\gamma|\geq1\}},
$$
then $T$ is not Li-Yorke chaotic.
\end{proposition}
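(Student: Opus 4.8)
The plan is to argue by contradiction, using the characterization of Li-Yorke chaos through semi-irregular vectors supplied by Theorem \ref{LYequiv}. Suppose $T$ were Li-Yorke chaotic. Then $T$ would admit a semi-irregular vector $x$ in the sense of Definition \ref{irreg}; in particular $x\neq0$ (if $x=0$ then $T^nx=0$ for all $n$ and the orbit would converge to zero, contradicting semi-irregularity) and there would be a subsequence $(n_k)$ with $T^{n_k}x\to0$.

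The key computation is the interaction between this decaying subsequence and the eigenfunctionals of $T^*$. If $\varphi\in X^*$ satisfies $T^*\varphi=\gamma\varphi$ with $|\gamma|\geq1$, then for every $n$ one has $\varphi(T^nx)=((T^*)^n\varphi)(x)=\gamma^n\varphi(x)$. Evaluating along the subsequence and using continuity of $\varphi$, I obtain $\gamma^{n_k}\varphi(x)=\varphi(T^{n_k}x)\to0$. Since $|\gamma^{n_k}|=|\gamma|^{n_k}\geq1$ for all $k$, the chain $|\varphi(x)|\leq|\gamma|^{n_k}|\varphi(x)|=|\varphi(T^{n_k}x)|\to0$ forces $\varphi(x)=0$.

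Next I would promote this to all of $X^*$. The evaluation functional $\hat{x}\colon X^*\to\mathbb{C}$, $\hat{x}(\varphi)=\varphi(x)$, is linear and norm-continuous, so its kernel is a norm-closed subspace of $X^*$. By the previous step this kernel contains every eigenfunctional of $T^*$ with eigenvalue of modulus at least $1$, hence it contains the closed linear span of those eigenfunctionals, which by hypothesis is all of $X^*$. Therefore $\varphi(x)=0$ for every $\varphi\in X^*$, and Hahn-Banach then gives $x=0$, contradicting $x\neq0$. This contradiction shows that $T$ cannot be Li-Yorke chaotic.

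I do not anticipate a serious obstacle; the argument is a strengthening of the single-eigenvalue statement in Lemma \ref{ndLY} from densely Li-Yorke chaos to Li-Yorke chaos, made possible precisely because a semi-irregular vector already furnishes a nonzero vector whose orbit has a subsequence tending to zero. The only point requiring a little care is the passage from the eigenfunctionals to the whole dual space, which rests on the continuity of evaluation at $x$ together with the spanning hypothesis; the modulus bound $|\gamma|\geq1$ is exactly what makes the decay of $\varphi(T^{n_k}x)$ incompatible with $\varphi(x)\neq0$.
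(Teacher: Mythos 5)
Your proof is correct and follows essentially the same route as the paper: both exploit the identity $\varphi(T^nx)=\gamma^n\varphi(x)$ for eigenfunctionals of $T^*$ with $|\gamma|\geq1$ to rule out any subsequence of a nonzero orbit tending to zero, and then invoke the semi-irregular vector characterization of Li-Yorke chaos. You merely run the argument by contradiction rather than directly, and you spell out the closed-kernel/Hahn--Banach step that the paper leaves implicit when it asserts the existence of an eigenfunctional $\varphi$ with $\varphi(x)\neq0$.
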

\begin{proof}
Given any $0\neq x\in X$. Since
$$
X^*=\overline{{\rm span}\{\varphi\in X^*; \ T^*\varphi=\gamma\varphi \ for \ some \ \gamma\in\mathbb{C} \ with \ |\gamma|\geq1\}},
$$
there exists $\varphi\in X^*$ such that $T^*\varphi=\gamma \varphi$ with $|\gamma|\geq1$ and $\varphi(x)\neq0$. Then
$$
|\varphi(T^nx)|=|(T^*)^n\varphi(x)|=|\gamma^n\varphi(x)|\geq|\varphi(x)|.
$$
This implies that there is no subsequence $\{n_k\}_{k=1}^{+\infty}$ such that $\lim\nolimits_{k\rightarrow+\infty}T^n x=0$, and hence $T$ is not Li-Yorke chaotic.
\end{proof}
Similarly, one can see the following result.
\begin{proposition}\label{exp}
Let $T\in\mathcal{B}(X)$.
If
$$
X^*=\overline{{\rm span}\{\varphi\in X^*; \ T^*\varphi=\gamma\varphi \ for \ some \ \gamma\in\mathbb{C} \ with \ |\gamma|>1\}},
$$
then for any $0\neq x\in X$, $\lim\nolimits_{n\rightarrow+\infty}T^nx=\infty$.
\end{proposition}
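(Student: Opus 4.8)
The plan is to follow the same strategy as in the proof of Proposition \ref{nLY}, but to exploit the strict inequality $|\gamma|>1$ in order to upgrade the conclusion from ``no subsequence tends to zero'' to ``the whole orbit diverges in norm.'' Fix an arbitrary $0\neq x\in X$. I would first produce a single eigenfunctional of $T^*$ that does not annihilate $x$: since
$$
X^*=\overline{{\rm span}\{\varphi\in X^*; \ T^*\varphi=\gamma\varphi \ \text{for some} \ \gamma\in\mathbb{C} \ \text{with} \ |\gamma|>1\}},
$$
if every such eigenfunctional $\varphi$ satisfied $\varphi(x)=0$, then by linearity and continuity every element of the closed span, i.e.\ every functional in $X^*$, would vanish at $x$, forcing $x=0$ by the Hahn--Banach theorem. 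This contradiction yields a $\varphi\in X^*$ with $T^*\varphi=\gamma\varphi$, $|\gamma|>1$, and $\varphi(x)\neq0$.

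With such a $\varphi$ fixed, the next step is the elementary computation
$$
|\varphi(T^nx)|=|(T^*)^n\varphi(x)|=|\gamma^n\varphi(x)|=|\gamma|^n\,|\varphi(x)|.
$$
Because $|\gamma|>1$ and $|\varphi(x)|>0$, the right-hand side tends to $+\infty$. On the other hand, boundedness of $\varphi$ gives the estimate $|\varphi(T^nx)|\leq\|\varphi\|\,\|T^nx\|$, so that
$$
\|T^nx\|\geq\frac{|\gamma|^n\,|\varphi(x)|}{\|\varphi\|}\xrightarrow[n\to\infty]{}+\infty,
$$
which is exactly the assertion $\lim_{n\to+\infty}T^nx=\infty$.

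I do not expect a genuine obstacle here: the argument is essentially a one-line perturbation of Proposition \ref{nLY}, the only point requiring care being the Hahn--Banach step that guarantees the existence of an eigenfunctional not killing $x$ (which is already implicit in the proof of Proposition \ref{nLY}). The role of the strict inequality $|\gamma|>1$ is solely to force $|\gamma|^n\to+\infty$ rather than merely to keep the quantity bounded below by a positive constant; everything else in the reasoning is identical.
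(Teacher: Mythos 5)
Your proof is correct and is precisely the argument the paper intends: the paper omits a proof of Proposition~\ref{exp}, stating only that it follows ``similarly'' to Proposition~\ref{nLY}, and your argument is exactly that adaptation --- the same Hahn--Banach selection of an eigenfunctional $\varphi$ with $\varphi(x)\neq 0$, upgraded by the lower bound $\|T^nx\|\geq |\gamma|^n|\varphi(x)|/\|\varphi\|$, which the strict inequality $|\gamma|>1$ sends to infinity.
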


\begin{corollary}\label{cdex}
Let $T\in B_n(\Omega)$. If $\Omega-\mathbb{D}\neq\emptyset$, then for any $0\neq x\in X$, $\lim\limits_{n\rightarrow+\infty}(T^*)^nx=\infty$ and hence $T^*$ is not Li-Yorke chaotic.
\end{corollary}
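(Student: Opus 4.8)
The plan is to deduce the growth statement $\lim_{n\to\infty}(T^*)^n x=\infty$ from Proposition \ref{exp} applied to the operator $T^*$, and then to read off non-Li-Yorke-chaoticity from Theorem \ref{LYequiv}. Indeed, once we know $\|(T^*)^n x\|\to\infty$ for every $0\neq x$, no nonzero vector can have a subsequence of its orbit converging to $0$; hence $T^*$ has no semi-irregular vector and, by the equivalence (i)$\Leftrightarrow$(iii) of Theorem \ref{LYequiv}, $T^*$ is not Li-Yorke chaotic. So the whole problem reduces to verifying the hypothesis of Proposition \ref{exp} for $T^*$.

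To apply Proposition \ref{exp} with the operator taken to be $T^*$, I must exhibit a total set of eigenfunctionals of its adjoint whose eigenvalues have modulus $>1$. Since $(T^*)^*=T$, under the Riesz identification $\mathcal{H}^*\cong\mathcal{H}$ (a conjugate-linear isometry) these eigenfunctionals correspond to the eigenvectors of $T$; the correspondence carries an eigenvalue $\gamma$ to its conjugate $\bar\gamma$ but leaves the modulus unchanged. Hence the hypothesis of Proposition \ref{exp} for $T^*$ amounts to the density requirement
$$\overline{\bigvee\{\,\text{Ker}(T-\omega):\ \omega\in\Omega,\ |\omega|>1\,\}}=\mathcal{H},$$
i.e. that the eigenvectors of $T$ whose eigenvalues lie in $\Omega_0:=\{\omega\in\Omega:|\omega|>1\}$ are total in $\mathcal{H}$.

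Two observations finish this. First, $\Omega_0$ is a nonempty open set: the hypothesis $\Omega\setminus\mathbb{D}\neq\emptyset$ gives some $\omega_0\in\Omega$ with $|\omega_0|\geq1$, and since $\Omega$ is open, a small perturbation of $\omega_0$ staying inside $\Omega$ has modulus $>1$. Second, by the Cowen--Douglas structure of $B_n(\Omega)$ (\cite{Cowen}) there exist holomorphic cross-sections $\gamma_1,\dots,\gamma_n:\Omega\to\mathcal{H}$ with $\{\gamma_1(\omega),\dots,\gamma_n(\omega)\}$ a basis of $\text{Ker}(T-\omega)$ for each $\omega$. If $x\in\mathcal{H}$ were orthogonal to every $\gamma_i(\omega)$ with $\omega\in\Omega_0$, then each scalar function $\omega\mapsto\langle\gamma_i(\omega),x\rangle$ would be (anti)holomorphic on the connected set $\Omega$ and vanish on the nonempty open subset $\Omega_0$; by the identity theorem it would vanish on all of $\Omega$, so $x$ would be orthogonal to $\bigvee_{\omega\in\Omega}\text{Ker}(T-\omega)=\mathcal{H}$ by condition (c), forcing $x=0$. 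Hence the span over $\Omega_0$ is dense, the hypothesis of Proposition \ref{exp} holds, and the corollary follows.

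The main obstacle is precisely this last density step: condition (c) only asserts totality of the eigenvectors over the full region $\Omega$, whereas the growth estimate needs totality over the smaller region $\Omega_0$ of large eigenvalues. Passing from one to the other is what forces me to invoke the holomorphic dependence of the eigenspaces together with an analytic-continuation (identity theorem) argument; the bookkeeping between the Hilbert-space and Banach-space adjoints, and the harmless conjugation of eigenvalues under the Riesz map, is a secondary technical point that does not affect the modulus condition.
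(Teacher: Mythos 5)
Your proposal is correct and follows exactly the route the paper intends: the corollary is stated as an immediate consequence of Proposition \ref{exp} applied to $T^*$ (with the Banach/Hilbert adjoint identification you describe), and you supply the one nontrivial step the paper leaves implicit, namely passing from totality of the eigenvectors over all of $\Omega$ (condition (c)) to totality over $\Omega_0=\{\omega\in\Omega:|\omega|>1\}$ via holomorphic cross-sections and the identity theorem on the connected set $\Omega$. One pedantic caveat: the Cowen--Douglas structure directly furnishes only \emph{local} holomorphic cross-sections of $\omega\mapsto\mathrm{Ker}(T-\omega)$, but your argument survives unchanged---either run the identity theorem locally and use openness/closedness of the vanishing set in the connected domain $\Omega$, or invoke triviality of holomorphic vector bundles over planar domains to get global sections---so this does not affect correctness.
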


\begin{proposition}\label{shift}
Let $T$ be the unilateral forward shift operator on $l^2(\mathbb{N})$ (the space of square summable sequences),
$$
T(x_1,x_2,\cdots)=(0,x_1,x_2,\cdots), \ \ \ \ for \ any \ (x_1,x_2,\cdots)\in l^2(\mathbb{N}).
$$
Then $S_{LY}(T^*)=2\mathbb{D}\setminus\{0\}$,
$S_{LY}(2T^*)=3\mathbb{D}$ and $S_{LY}(T)=S_{LY}(2T)=\emptyset$.
\end{proposition}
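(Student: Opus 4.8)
The plan is to reduce all four computations to the single observation that $T^{*}=B$ is the backward shift, whose point spectrum is the open disc $\mathbb{D}$ with eigenvector $e_{\mu}=(1,\mu,\mu^{2},\dots)$ attached to each $\mu\in\mathbb{D}$, together with the density fact that $\{e_{\mu}:\mu\in A\}$ spans a dense subspace of $l^{2}(\mathbb{N})$ whenever $A\subseteq\mathbb{D}$ has an accumulation point in $\mathbb{D}$. Indeed, if $g\perp e_{\mu}$ for all such $\mu$, the analytic function $\mu\mapsto\langle e_{\mu},g\rangle$ on $\mathbb{D}$ vanishes on $A$, hence identically, forcing $g=0$. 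The two families of operators then behave oppositely: the backward-shift multiples $cB$ (with $c=1,2$) will be chaotic on discs of radii $2$ and $3$, whereas the forward-shift multiples $cT$ will never be chaotic.

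For the backward shift I would first show that for $c>0$ and any $\lambda$ the operator $\lambda+cB$ lies in the Cowen--Douglas class $B_{1}(\Omega)$ with $\Omega=\lambda+c\mathbb{D}$. Since $(\lambda+cB)-\omega=c(B-\mu)$ for $\omega=\lambda+c\mu$, for $\mu\in\mathbb{D}$ the kernel is the one-dimensional span of $e_{\mu}$, which yields (d) and, upon taking the closed linear span over all $\mu\in\mathbb{D}$, condition (c) by the density remark; the range is all of $l^{2}(\mathbb{N})$ because $(B-\mu)^{*}=T-\bar\mu$ is bounded below by $1-|\mu|$, giving (b); and $\Omega\subseteq\lambda+c\overline{\mathbb{D}}=\sigma(\lambda+cB)$, giving (a). Lemma \ref{cd} then produces Li--Yorke chaos as soon as $\Omega\cap\mathbb{T}\neq\emptyset$. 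As the closest point of $\mathbb{T}$ to the center $\lambda$ lies at distance $\bigl||\lambda|-1\bigr|$, this intersection is nonempty exactly when $\bigl||\lambda|-1\bigr|<c$, i.e. when $0<|\lambda|<2$ for $c=1$ and when $|\lambda|<3$ for $c=2$.

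For the complementary directions I would use a boundedness-below estimate: from $\|B\|=1$ one gets $\|(\lambda+cB)x\|\geq(|\lambda|-c)\|x\|$, so for $|\lambda|\geq c+1$ the operator is bounded below by $1$ and $\|(\lambda+cB)^{n}x\|\geq\|x\|$ for all $n$; no nonzero vector is then semi-irregular, and Theorem \ref{LYequiv} gives that $\lambda+cB$ is not chaotic. This excludes $|\lambda|\geq 2$ for $c=1$ and $|\lambda|\geq 3$ for $c=2$. The sole remaining point is $\lambda=0$, $c=1$: here $B^{n}x\to 0$ for every $x$, so again there is no semi-irregular vector and $B$ is not chaotic. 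Assembling the ranges yields $S_{LY}(T^{*})=2\mathbb{D}\setminus\{0\}$ and $S_{LY}(2T^{*})=3\mathbb{D}$.

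For the forward shifts I would apply Proposition \ref{nLY} to the adjoint $(\lambda+cT)^{*}=\bar\lambda+cB$, whose eigenvectors are the same $e_{\mu}$ with eigenvalues $\bar\lambda+c\mu$; the relevant set is $A_{\lambda}=\{\mu\in\mathbb{D}:|\bar\lambda+c\mu|\geq 1\}$. For $c=2$ this is $\mathbb{D}$ with an open disc of radius $1/2$ removed, which always contains a nonempty open set and hence an accumulation point in $\mathbb{D}$, so the corresponding eigenvectors span $X^{*}$ densely and Proposition \ref{nLY} shows $\lambda+2T$ is never chaotic, giving $S_{LY}(2T)=\emptyset$. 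For $c=1$ the same argument works for every $\lambda\neq 0$, since $A_{\lambda}$ then contains a small lune of $\mathbb{D}$ near the boundary point $\bar\lambda/|\lambda|$; the degenerate case $\lambda=0$ (where $A_{0}=\emptyset$) is handled directly because $T$ is an isometry, so $\|T^{n}x\|=\|x\|$ admits no semi-irregular vector, and $S_{LY}(T)=\emptyset$. I expect the main obstacle to be the careful verification of the Cowen--Douglas axioms, especially surjectivity of $B-\mu$ and the density of the eigenvector span, together with pinning down the exact disc--circle geometry responsible for the thresholds $2$ and $3$ and the correct treatment of the boundary values $|\lambda|\in\{2,3\}$ and the isolated point $\lambda=0$.
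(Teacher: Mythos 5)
Your proposal is correct and follows essentially the same route as the paper: Lemma \ref{cd} applied to $\lambda+cT^{*}\in B_{1}(\lambda+c\mathbb{D})$ for the chaotic part, a lower-bound estimate for $|\lambda|$ outside the relevant discs, the span-of-unimodular-or-larger-eigenvector criterion for the forward shifts (which the paper invokes through Corollary \ref{cdex}, while you apply Proposition \ref{nLY} directly with the standard analyticity/density argument for $\{e_{\mu}\}$), and the isometry observation for $T$ itself. The only differences are expository; indeed your bounded-below inequality $\|(\lambda+cT^{*})x\|\geq(|\lambda|-c)\|x\|$ is the precise reading of the paper's terse step $\|\lambda+T^{*}\|\geq\|\lambda\|-\|T^{*}\|\geq 1$, which as literally written (an operator-norm bound) would not by itself rule out semi-irregular vectors.
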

\begin{proof}
For any $\lambda\in2\mathbb{D}\setminus\{0\}$, one can see $(\lambda+\mathbb{D})\bigcap\mathbb{T}\neq\emptyset$. Together with $\lambda+T^*\in B_1(\lambda+\mathbb{D})$, we have $\lambda+T^*$ is
Li-Yorke chaotic by Lemma \ref{cd}. For any $\lambda\notin2\mathbb{D}$,
$$
\|\lambda+T^*\|\geq \|\lambda\|-\|T^*\|\geq2-1=1,
$$
then $\lambda+T^*$ is not Li-Yorke chaotic. Obviously, $T^*$ is not Li-Yorke chaotic. Therefore, $S_{LY}(T^*)=2\mathbb{D}\setminus\{0\}$. As well-known, $2T^*$ is Li-Yorke chaotic. Then, in a similar manner, one can obtain $S_{LY}(2T^*)=3\mathbb{D}$.

For any $\lambda\in\mathbb{C}\setminus\{0\}$, $(\lambda+T)^*=\overline{\lambda}+T^*$ is a Cowen-Douglas operator, more precisely, $(\lambda+T)^*\in B_1(\overline{\lambda}+\mathbb{D})$. Then $\overline{\lambda}+T^*$ satisfies the condition in Corollary \ref{cdex}, and consequently $\lambda+T$ is not Li-Yorke chaotic. It is obvious that $T$ is an isometry and hence is not Li-Yorke chaotic.  Therefore, $S_{LY}(T)=\emptyset$. Similarly, $S_{LY}(2T)=\emptyset$.
\end{proof}

Now, one can get that spectrum of a given bounded linear operator must be closed but the Li-Yorke chaos translation set of which maybe open. Next, let us consider compact operator. From works of \cite{HTS, Ber, HTZ, Bern}, one can get the following two results.
\begin{lemma}\label{nc}(\cite{HTS, Ber, Bern})
Let $T\in\mathcal{B}(\mathcal{H})$ be a compact operator. Then $T$ is not Li-Yorke chaotic.
\end{lemma}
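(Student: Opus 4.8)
The plan is to use the equivalence in Theorem \ref{LYequiv}: it suffices to show that $T$ admits no semi-irregular vector, i.e.\ that for every $x\in\mathcal{H}$, whenever $\{T^n x\}$ has a subsequence converging to zero, the whole sequence already converges to zero. The strategy is to exploit the rigidity of the spectrum of a compact operator: $\sigma(T)$ consists of $0$ together with at most countably many eigenvalues of finite multiplicity whose only possible accumulation point is $0$.

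First I would split the spectrum by the unit circle. Put $\sigma_1=\sigma(T)\cap\{|z|\ge 1\}$ and $\sigma_2=\sigma(T)\cap\{|z|<1\}$. Since the eigenvalues can accumulate only at $0$, the set $\sigma_1$ is finite and, crucially, $s:=\sup\{|z|:z\in\sigma_2\}<1$ (otherwise eigenvalues would accumulate on the unit circle). Hence the open annulus $\{s<|z|<1\}$ contains no spectrum, and the Riesz functional calculus applied to a separating circle yields a $T$-invariant decomposition $\mathcal{H}=M\oplus N$ with $\sigma(T|_M)=\sigma_1$ and $\sigma(T|_N)=\sigma_2$, where $M$ is finite-dimensional. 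On $N$ the spectral radius equals $s<1$, so by Gelfand's formula $\|(T|_N)^n\|\to 0$, whence $(T|_N)^n y\to 0$ for every $y\in N$.

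Next I would analyse the finite-dimensional piece $M$. Writing $x=u+y$ with $u\in M$ and $y\in N$, the contribution $(T|_N)^n y$ always tends to $0$, so the asymptotics of $\{T^n x\}$ are governed by $(T|_M)^n u$. Using the Jordan decomposition of $T|_M$ into blocks $\lambda I+Q$ with $|\lambda|\ge 1$ and $Q$ nilpotent, I would show that for $u\neq 0$ the norm $\|(T|_M)^n u\|$ is bounded below away from $0$: when $|\lambda|>1$, or when $|\lambda|=1$ and the nilpotent part acts nontrivially on $u$, the binomial growth $\binom{n}{k}$ forces $\|(T|_M)^n u\|\to\infty$, while in the purely unimodular semisimple case $\|(T|_M)^n u\|$ stays comparable to $\|u\|$. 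Consequently, if some subsequence $T^{n_k}x\to 0$, then $u=0$, and then $T^n x=(T|_N)^n y\to 0$ for the whole sequence. This dichotomy shows that no semi-irregular vector exists, so by Theorem \ref{LYequiv} the operator $T$ is not Li-Yorke chaotic.

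I expect the main obstacle to be the treatment of eigenvalues lying exactly on the unit circle. For these the spectral-radius heuristic is inconclusive (the radius equals $1$), so one cannot simply invoke norm decay or blow-up; instead one must track the finite-dimensional dynamics through the Jordan form and observe that a nontrivial nilpotent part produces polynomial growth while a trivial one produces isometric behaviour on that block. Once this lower bound on $\|(T|_M)^n u\|$ is secured, the remaining steps are routine.
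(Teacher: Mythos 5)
Your argument is correct and complete; note that the paper does not prove this lemma at all but quotes it from \cite{HTS, Ber, Bern}, and the proof in those references is essentially the one you give: a Riesz splitting of $\sigma(T)$ across the spectrum-free annulus $\{s<|z|<1\}$ (which exists because the eigenvalues of a compact operator accumulate only at $0$), geometric decay $\|(T|_N)^n\|\to 0$ via Gelfand's formula on the inner part, and a lower bound on $\|(T|_M)^n u\|$ for $u\neq 0$ on the finite-dimensional outer part, whose eigenvalues all have modulus at least $1$, so that no semi-irregular vector can exist and Theorem \ref{LYequiv} applies. The only step you flag as delicate, the unimodular Jordan blocks, in fact follows in one line: on a block $\lambda I+Q$ with $|\lambda|\geq 1$ and $m$ maximal with $Q^{m}u\neq 0$, one has $Q^{m}(\lambda I+Q)^{n}u=\lambda^{n}Q^{m}u$, hence $\|(\lambda I+Q)^{n}u\|\geq\|Q^{m}u\|/\|Q^{m}\|$ for all $n$, which yields the uniform lower bound without distinguishing polynomial growth from the semisimple isometric case.
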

\begin{lemma}\label{spec}(\cite{Ber, HTZ})
Let $T\in\mathcal{B}(\mathcal{H})$.
If $T$ is Li-Yorke chaotic, then $\sigma(T)\cap\mathbb{T}\neq\emptyset$.
\end{lemma}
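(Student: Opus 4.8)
The plan is to establish the contrapositive: assuming $\sigma(T)\cap\mathbb{T}=\emptyset$, I would show that $T$ is not Li-Yorke chaotic. By Theorem \ref{LYequiv} it is enough to verify that $T$ has no semi-irregular vector; that is, for every $x\in\mathcal{H}$ the orbit $\{T^n x\}$ either converges to $0$ (so $x$ fails the ``does not converge to zero'' requirement) or admits no subsequence tending to $0$.

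The decisive tool is the Riesz holomorphic functional calculus. Because the unit circle is disjoint from the compact set $\sigma(T)$, the spectrum splits as a disjoint union $\sigma(T)=\sigma_{\mathrm{in}}\sqcup\sigma_{\mathrm{out}}$, where $\sigma_{\mathrm{in}}=\sigma(T)\cap\mathbb{D}$ and $\sigma_{\mathrm{out}}=\sigma(T)\cap\{z\in\mathbb{C};|z|>1\}$ are compact and separated by the annular gap around $\mathbb{T}$. Choosing a contour $\Gamma\subseteq\mathbb{D}$ that encloses $\sigma_{\mathrm{in}}$ and nothing else, the Riesz idempotent $P=\frac{1}{2\pi i}\oint_{\Gamma}(z-T)^{-1}\,dz$ induces a decomposition $\mathcal{H}=\mathcal{H}_{\mathrm{in}}\oplus\mathcal{H}_{\mathrm{out}}$ into closed $T$-invariant subspaces with $\sigma(T|_{\mathcal{H}_{\mathrm{in}}})=\sigma_{\mathrm{in}}$ and $\sigma(T|_{\mathcal{H}_{\mathrm{out}}})=\sigma_{\mathrm{out}}$. (The degenerate cases $\sigma_{\mathrm{in}}=\emptyset$ or $\sigma_{\mathrm{out}}=\emptyset$ are handled directly and are simpler.)

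Next I would quantify the dynamics on each summand through the spectral radius formula $r(A)=\lim_n\|A^n\|^{1/n}$. On $\mathcal{H}_{\mathrm{in}}$ one has $r(T|_{\mathcal{H}_{\mathrm{in}}})<1$, so $\|T^n y\|\to0$ for every $y\in\mathcal{H}_{\mathrm{in}}$. On $\mathcal{H}_{\mathrm{out}}$ the restriction is invertible with $r\big((T|_{\mathcal{H}_{\mathrm{out}}})^{-1}\big)<1$, whence there exist $C>0$ and $\rho\in(0,1)$ with $\|(T|_{\mathcal{H}_{\mathrm{out}}})^{-n}\|\leq C\rho^{n}$, giving $\|T^n z\|\geq C^{-1}\rho^{-n}\|z\|\to\infty$ for every nonzero $z\in\mathcal{H}_{\mathrm{out}}$. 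Writing $x=y+z$ with $y\in\mathcal{H}_{\mathrm{in}}$ and $z\in\mathcal{H}_{\mathrm{out}}$, if $z\neq0$ then boundedness of the projection onto $\mathcal{H}_{\mathrm{out}}$ transfers this growth to $\|T^n x\|\to\infty$, so no subsequence of $\{T^n x\}$ tends to zero; if $z=0$ then $T^n x\to0$. In either case $x$ is not semi-irregular, so $T$ is not Li-Yorke chaotic.

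I expect the main (and essentially only) obstacle to be the spectral splitting step: one must be sure that $\sigma(T)\cap\mathbb{T}=\emptyset$ genuinely separates the spectrum into two relatively clopen pieces admitting a separating contour, which is exactly what the positive gap around $\mathbb{T}$ provides. After that, the only mild technical point is that the Riesz projections need not be orthogonal, so the divergence of the $\mathcal{H}_{\mathrm{out}}$-component must be passed to $\|T^n x\|$ via the boundedness of $P$ rather than by a Pythagorean identity.
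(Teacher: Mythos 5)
Your proof is correct, and since the paper states this lemma without proof (citing \cite{Ber, HTZ}), the right comparison is with the cited sources: your argument via the Riesz idempotent splitting $\sigma(T)=\sigma_{\mathrm{in}}\sqcup\sigma_{\mathrm{out}}$, with $\|T^n y\|\to 0$ on the inner summand and $\|T^n z\|\to\infty$ on the outer one, is precisely the standard proof given there. You also correctly flag the two genuine technical points — the clopen spectral separation guaranteed by the positive distance from $\sigma(T)$ to $\mathbb{T}$, and passing divergence to $\|T^n x\|$ through the bounded (not orthogonal) Riesz projection — so nothing is missing.
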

\begin{proposition}\label{compact}
Let $T\in\mathcal{B}(X)$ be a compact operator. Then $S_{LY}(T)\subseteq\mathbb{T}$.
\end{proposition}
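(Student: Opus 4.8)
The plan is to establish the contrapositive: if $|\lambda|\neq 1$, then $\lambda+T$ is not Li-Yorke chaotic, which gives $S_{LY}(T)\subseteq\mathbb{T}$. Fix such a $\lambda$. Since $T$ is compact and $X$ is infinite-dimensional, the Riesz--Schauder theory describes $\sigma(\lambda+T)=\lambda+\sigma(T)$ completely: it consists of $\lambda$ together with a finite or countable sequence of eigenvalues $\lambda+\mu_n$, each of finite algebraic multiplicity, whose only possible accumulation point is $\lambda$. Because $|\lambda|\neq 1$, the unit circle $\mathbb{T}$ contains no accumulation point of $\sigma(\lambda+T)$; hence a whole annular neighbourhood of $\mathbb{T}$ meets $\sigma(\lambda+T)$ in at most finitely many isolated eigenvalues of finite multiplicity. (One may note that Lemma \ref{spec} already guarantees $\sigma(\lambda+T)\cap\mathbb{T}\neq\emptyset$ when $\lambda+T$ is chaotic, but the argument below does not need this and works on a general Banach space $X$.)

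Next I would split the spectrum along $\mathbb{T}$ using a Riesz idempotent. Let $P$ be the Riesz projection associated with the spectral set $\sigma(\lambda+T)\cap\{|z|\leq 1\}$ in the case $|\lambda|>1$, and with $\sigma(\lambda+T)\cap\{|z|\geq 1\}$ in the case $|\lambda|<1$. In either case this spectral set is finite and avoids the accumulation point $\lambda$, so it consists of nonzero eigenvalues of finite multiplicity and $M:={\rm Ran}(P)$ is finite-dimensional. Setting $N:={\rm Ran}(I-P)$, both $M$ and $N$ are closed and $(\lambda+T)$-invariant, $X=M\oplus N$, and the projections $P,\,I-P$ are bounded. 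On $N$ the spectrum is the part of $\sigma(\lambda+T)$ lying on the same side of $\mathbb{T}$ as $\lambda$: when $|\lambda|<1$ it is contained in $\{|z|<1\}$ and, being compact, has spectral radius strictly less than $1$, so $\|(\lambda+T)^{n}y\|\to 0$ for every $y\in N$; when $|\lambda|>1$ it is contained in $\{|z|>1\}$ with $\inf\{|z|:z\in\sigma((\lambda+T)|_N)\}>1$, so $(\lambda+T)|_N$ is invertible with $\|((\lambda+T)|_N)^{-n}\|\to 0$, whence $\|(\lambda+T)^{n}y\|\to\infty$ for every nonzero $y\in N$ and its orbit admits no subsequence tending to zero.

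I would then rule out semi-irregular vectors, which by Theorem \ref{LYequiv} is equivalent to ruling out Li-Yorke chaos. Suppose $z=x+y$ with $x\in M,\ y\in N$ were semi-irregular, and let $(\lambda+T)^{n_k}z\to 0$ be a null subsequence. Applying the bounded projections $P$ and $I-P$ shows that $(\lambda+T)^{n_k}x\to 0$ and $(\lambda+T)^{n_k}y\to 0$. In the case $|\lambda|>1$, the behaviour on $N$ forces $y=0$, so $z=x\in M$ and $z$ is a semi-irregular vector for the finite-dimensional operator $(\lambda+T)|_M$. In the case $|\lambda|<1$, since $(\lambda+T)^{n}y\to 0$ already, the requirement $(\lambda+T)^{n}z\not\to 0$ forces $(\lambda+T)^{n}x\not\to 0$, while $(\lambda+T)^{n_k}x\to 0$; thus $x$ is a semi-irregular (in particular nonzero) vector for $(\lambda+T)|_M$. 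Either way we obtain a semi-irregular vector for an operator on the finite-dimensional space $M$. But every operator on a finite-dimensional space is compact, hence not Li-Yorke chaotic by Lemma \ref{nc}, hence has no semi-irregular vector by Theorem \ref{LYequiv}. This contradiction shows $\lambda+T$ has no semi-irregular vector, so it is not Li-Yorke chaotic, and therefore $S_{LY}(T)\subseteq\mathbb{T}$.

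The main obstacle I expect is justifying the two uniform estimates on $N$, namely uniform contraction when $|\lambda|<1$ and the bounded-below (uniform expansion) property when $|\lambda|>1$. These rest on the fact that $\sigma((\lambda+T)|_N)$ lies strictly on one side of $\mathbb{T}$; in the expanding case this requires checking that $\inf\{|z|:z\in\sigma((\lambda+T)|_N)\}>1$, which follows because the unique accumulation point $\lambda$ satisfies $|\lambda|>1$ and the remaining spectrum is a finite set bounded away from $\mathbb{T}$. The only other point to verify is the routine fact that a null subsequence of the full orbit descends to null subsequences of the two components, which is exactly the continuity (boundedness) of the Riesz projections.
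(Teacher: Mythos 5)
Your proof is correct, but it takes a genuinely different route from the paper's. The paper also splits off a finite-dimensional Riesz summand, but it cuts differently: it writes $\lambda+T=T_1\oplus T_2$ on $X=X_1\oplus X_2$ with $\dim X_2<\infty$ and $\sigma(T_1)\cap\mathbb{T}=\emptyset$ (so $\sigma(T_1)$ is allowed to straddle both sides of the unit circle), disposes of $T_1$ by quoting Lemma \ref{spec} (Li-Yorke chaos forces the spectrum to meet $\mathbb{T}$), of $T_2$ by Lemma \ref{nc}, and combines the two pieces via Proposition \ref{basicp2} --- a three-line proof resting on three quoted facts. You instead cut strictly along the circle: your finite-dimensional summand $M$ absorbs all spectrum on the far side of $\mathbb{T}$ from $\lambda$, including any points on $\mathbb{T}$ itself, so the complementary part $N$ has spectrum strictly inside or strictly outside the closed unit disc, and elementary spectral-radius (Gelfand) estimates give uniform contraction, respectively uniform expansion, on $N$; you then rule out semi-irregular vectors by hand and conclude via Theorem \ref{LYequiv}. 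What your route buys: you never need Lemma \ref{spec} --- its hard content, that an operator whose spectrum misses $\mathbb{T}$ (possibly hyperbolically, with spectrum on both sides) cannot be Li-Yorke chaotic, is sidestepped by your one-sided cut --- nor Proposition \ref{basicp2}, and the argument is self-contained on an arbitrary Banach space, whereas the paper applies Lemmas \ref{nc} and \ref{spec} as stated for Hilbert space to a Banach-space claim (the results do hold in Banach spaces by \cite{Ber, Bern}, but the paper leaves this implicit). What it costs is length. Two cosmetic remarks: your appeal to Lemma \ref{nc} on $M$ tacitly identifies the finite-dimensional space $M$ with a Hilbert space via a linear homeomorphism, which is harmless since Li-Yorke chaos is invariant under linear conjugacy (alternatively, the Jordan form shows directly that no finite-dimensional operator has a semi-irregular vector), and the paper's treatment of $T_2$ has the same feature; and in the degenerate cases $P=0$ or $P=I$ your contradiction still goes through, though it deserves a word.
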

\begin{proof}
Given any $\lambda\notin\mathbb{T}$. According to Riesz's decomposition theorem, we have direct sum decompositions $\lambda+T=T_1\oplus T_2$ and $X=X_1\oplus X_2$,
$$
\lambda+T=\begin{matrix}\begin{bmatrix}
T_{1},&0\\
0, &T_{2}\\
\end{bmatrix}&
\begin{matrix}
X_1\\
X_2\end{matrix}\end{matrix}
$$
where $\sigma(T_1)\cap\mathbb{T}=\emptyset$ and $X_2$ is a finite dimensional vector space. Then by Lemma \ref{nc} and \ref{spec}, neither $T_1$ nor $T_2$ is Li-Yorke chaotic, and consequently $\lambda+T$ is not Li-Yorke chaotic. Thus, $S_{LY}(T)\subseteq\mathbb{T}$.
\end{proof}

To study the Li-Yorke chaos translation set of compact quasinilpotent shift, the following conclusion is useful.

\begin{lemma}\label{sa}(\cite{Salas, Bay})
Let $T\in\mathcal{B}(X)$.
If ${\rm span}(\bigcup_{n\in\mathbb{N}}{\rm Ker}(B^n)\cap {\rm Ran}(B^n))$ is dense in $X$, then $I+T$ is mixing and hence is Li-Yorke chaotic.
\end{lemma}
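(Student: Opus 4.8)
The plan is to prove the stronger statement that $S:=I+B$ is \emph{mixing}, and then read off Li-Yorke chaos at no cost. Indeed, a mixing operator is hypercyclic, and any hypercyclic vector $x$ has a dense — hence unbounded — orbit that simultaneously comes arbitrarily close to $0$; thus $x$ is an irregular vector in the sense of Definition \ref{irreg}, and $I+B$ is Li-Yorke chaotic by Theorem \ref{LYequiv}. So everything reduces to producing mixing for $S$.

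To establish mixing I would verify a standard sufficient condition \emph{along the full sequence} $(n)$, namely the Hypercyclicity (Kitai) Criterion: exhibit dense subspaces $X_0,Y_0\subseteq X$ and maps $R_n:Y_0\to X$ with $S^n x\to 0$ for $x\in X_0$, with $R_n y\to 0$ for $y\in Y_0$, and with $S^n R_n y\to y$ for $y\in Y_0$. The natural first candidate for both dense subspaces is the space handed to us by the hypothesis, $X_0=Y_0={\rm span}\bigl(\bigcup_{n}{\rm Ker}(B^n)\cap{\rm Ran}(B^n)\bigr)$. The role of the hypothesis is its \emph{two-sided} chain structure: a generator $x\in{\rm Ker}(B^m)\cap{\rm Ran}(B^m)$ sits in the middle of a finite $B$-chain $u,\,Bu,\,\dots,\,B^{2m}u=0$ with $x=B^m u$, so $x$ has both descendants (because $B^m x=0$) and ancestors (because $x\in{\rm Ran}\,B^m$). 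The descendant side forces $(I+B)^n=\sum_{k}\binom{n}{k}B^k$ to truncate to a finite sum on $X_0$, while the ancestor (range) side is exactly what lets one build the approximate right inverses $R_n$ by moving up the chains.

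The hard part is verifying the three limit conditions, and in particular the decay $S^n x\to 0$ on a dense set: on the bare finite chains the binomial coefficients $\binom{n}{k}$ grow polynomially in $n$ and obstruct any decay, so the chains must first be assembled into genuine eigenvectors. Concretely, I expect the decisive step to be the passage — using the ${\rm Ran}(B^n)$ part of the hypothesis to prolong the chains — to eigenvectors $h_\lambda=\sum_{k\geq 0}\lambda^k u_k$ of $B$, for which $Bh_\lambda=\lambda h_\lambda$ and hence $(I+B)h_\lambda=(1+\lambda)h_\lambda$; one then separates the parameters into $\{\lambda:|1+\lambda|<1\}$ and $\{\lambda:|1+\lambda|>1\}$ and applies the Godefroy-Shapiro eigenvalue criterion, whose two eigenspaces straddle $\mathbb{T}$ and give mixing at once. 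The obstacle I anticipate is precisely the convergence of the series defining $h_\lambda$ together with the density of the resulting eigenspaces — equivalently, controlling the growth of $\|u_k\|$ along the chains — since this quantitative input is exactly what upgrades the qualitative generalized-kernel hypothesis into mixing.
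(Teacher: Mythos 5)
The paper states this lemma without proof, citing Salas and Bayart--Matheron, so the comparison is with the standard cited argument. Your opening reduction is fine: a mixing operator is hypercyclic, a hypercyclic vector is irregular, and Theorem \ref{LYequiv} then gives Li-Yorke chaos. But the engine you propose for mixing --- prolonging the chains to eigenvectors $h_\lambda=\sum_{k\geq 0}\lambda^k u_k$ and invoking the Godefroy--Shapiro criterion --- cannot prove the lemma, and the ``obstacle'' you flag at the end (convergence of the series and density of the eigenspaces) is not a technical step awaiting quantitative control but an outright obstruction. First, the hypothesis only gives, for each generator $x$, a \emph{single} $n$ with $x\in{\rm Ker}(B^n)\cap{\rm Ran}(B^n)$: a chain of finite depth $n$ above $x$, not an infinite backward orbit, so the series $h_\lambda$ is in general not even definable. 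Second, and decisively: the hypothesis is satisfied by compact quasinilpotent weighted backward shifts --- exactly the case this paper applies the lemma to in Proposition \ref{ncshift}, where $\lambda+T^{*}$ is rewritten as $1+\overline{\lambda}T^{*}$ with $\overline{\lambda}T^{*}$ quasinilpotent. There $\sigma(I+B)=\{1\}$, so $I+B$ has no eigenvalues of modulus $\neq 1$ whatsoever; the two Godefroy--Shapiro eigenspaces straddling $\mathbb{T}$ are empty, and correspondingly every series $\sum_k\lambda^k u_k$ has zero radius of convergence because $\|u_k\|$ grows superexponentially. Any proof routed through eigenvectors off the unit circle is therefore dead on arrival for the very instance that motivates the lemma here.

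The known proof is more elementary and stays on the finite chains. One does not use the strict Kitai criterion (whose forward condition $S^n x\to 0$ on the chain span fails, as you correctly observed, since $I+B$ is unipotent on each chain), but a relaxed mixing criterion allowing approximation on both sides: for each $x$ in the dense span one finds $x_n\to x$ with $(I+B)^n x_n\to 0$, and for each $y$ one finds $w_n\to 0$ with $(I+B)^n w_n\to y$. On a two-sided chain $w, Bw,\dots, B^{2m}w=0$ with $x=B^m w$, both requirements reduce to solving triangular linear systems in the binomial coefficients $\binom{n}{k}$, whose solutions tend to $0$ at explicit polynomial rates; for instance, when $m=1$ one has
\begin{equation*}
(I+B)^n\Bigl(x-\tfrac{1}{n}\,w\Bigr)=-\tfrac{1}{n}\,w\longrightarrow 0,
\qquad
(I+B)^n\Bigl(\tfrac{1}{n}\,w\Bigr)=x+\tfrac{1}{n}\,w\longrightarrow x .
\end{equation*}
So the correct fix for the polynomial growth you identified is to replace exact forward decay by these binomial-coefficient corrections along the chains, using precisely the finite amount of range the hypothesis supplies --- not to manufacture eigenvectors, which need not exist.
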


\begin{proposition}\label{ncshift}
Let $T$ be a compact quasinilpotent unilateral forward weighted shift operator on $l^{2}(\mathbb{N})$ with weight
sequence $\{\omega_n\}_{n=1}^{+\infty}$,
$$
T(x_1,x_2,\cdots)=(0,\omega_1x_1,\omega_2x_2,\cdots) \ \ \ \ for \ any \ (x_1,x_2,\cdots)\in l^2(\mathbb{N}),
$$
where $\omega_n\neq0$ for all $n\in \mathbb{N}$ and $\omega_n\rightarrow0$ as $n\rightarrow+\infty$. Then $S_{LY}(T^{*})=\mathbb{T}$ and $S_{LY}(T)=\emptyset$.
\end{proposition}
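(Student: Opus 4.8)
The plan is to treat the two shifts separately, dealing first with the adjoint $T^*$, a compact backward weighted shift, and then with the forward shift $T$ itself; the two computations are genuinely different, since on $\mathbb{T}$ the behaviour of $\lambda+T^*$ and $\lambda+T$ will turn out to be opposite.

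For the equality $S_{LY}(T^*)=\mathbb{T}$, note that $T$ compact forces $T^*$ compact, so Proposition \ref{compact} already gives $S_{LY}(T^*)\subseteq\mathbb{T}$. For the reverse inclusion I would fix $\lambda\in\mathbb{T}$ and write $\lambda+T^*=\lambda(I+\bar\lambda T^*)$, setting $B:=\bar\lambda T^*$, which is again a backward weighted shift with $Be_1=0$, $Be_{k+1}=\bar\lambda\omega_k e_k$ and all weights nonzero. Because $|\lambda|=1$ one has $\|(\lambda(I+B))^nx\|=\|(I+B)^nx\|$ for every $n$ and every $x$, so $\lambda+T^*$ and $I+B$ have exactly the same (semi-)irregular vectors; by Theorem \ref{LYequiv} it therefore suffices to show $I+B$ is Li-Yorke chaotic. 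For this I would invoke Lemma \ref{sa} applied to $B$, and the computational core is to verify its hypothesis: a direct computation gives ${\rm Ker}(B^n)={\rm span}\{e_1,\dots,e_n\}$, while $e_1,\dots,e_n\in{\rm Ran}(B^n)$, so ${\rm Ker}(B^n)\cap{\rm Ran}(B^n)={\rm span}\{e_1,\dots,e_n\}$. Taking the union over $n$ yields all finitely supported vectors, whose span is dense in $l^2(\mathbb{N})$; Lemma \ref{sa} then makes $I+B$ mixing, hence Li-Yorke chaotic, and so $\mathbb{T}\subseteq S_{LY}(T^*)$.

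For $S_{LY}(T)=\emptyset$, I would use that $T$ is quasinilpotent, so $\sigma(\lambda+T)=\{\lambda\}$ for every $\lambda$. When $|\lambda|\neq1$ this singleton misses $\mathbb{T}$, so Lemma \ref{spec} shows $\lambda+T$ is not Li-Yorke chaotic (the point $\lambda=0$ is also covered directly by compactness via Lemma \ref{nc}). The remaining, and principal, case is $|\lambda|=1$, where Lemma \ref{spec} gives no information. Here I would argue directly with the lowest coordinate: for $0\neq x=\sum_{k\geq m}x_k e_k$ with $x_m\neq0$, since $T$ only raises indices the only contribution to the $e_m$-coordinate of $(\lambda+T)^nx$ comes from the diagonal term $\lambda^n I$, giving coefficient $\lambda^n x_m$ of modulus $|x_m|$. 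Hence $\|(\lambda+T)^nx\|\geq|x_m|>0$ for all $n$, so no orbit admits a subsequence tending to $0$; by Theorem \ref{LYequiv} the operator $\lambda+T$ has no semi-irregular vector and is not Li-Yorke chaotic. Combining the cases yields $S_{LY}(T)=\emptyset$.

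I expect the main obstacle to be exactly the case $|\lambda|=1$ for $S_{LY}(T)$: the spectrum then lies entirely on the unit circle, so no spectral criterion applies, and one must instead exploit the triangular structure of the forward shift to see that the bottom coordinate of every orbit is preserved in modulus. This is also the source of the striking asymmetry worth emphasising, namely that for $\lambda\in\mathbb{T}$ the adjoint $\lambda+T^*$ is Li-Yorke chaotic while $\lambda+T$ is not, which is precisely why the two halves of the statement resist a single duality argument and require the two distinct approaches above.
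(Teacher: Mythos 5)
Your proposal is correct and follows essentially the same route as the paper: compactness (Proposition \ref{compact}) confines both translation sets to $\mathbb{T}$, the rotation $\lambda+T^*=\lambda(I+\overline{\lambda}T^*)$ reduces the adjoint case to Lemma \ref{sa}, and the first-nonzero-coordinate bound $\|(\lambda+T)^nx\|\geq|x_m|$ rules out Li-Yorke chaos for $\lambda+T$ when $|\lambda|=1$. The only differences are cosmetic: you verify the dense-span hypothesis of Lemma \ref{sa} explicitly (the paper leaves that kernel/range computation implicit) and handle $|\lambda|\neq1$ via quasinilpotence and Lemma \ref{spec} rather than citing Proposition \ref{compact} a second time.
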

\begin{proof}
Since $T^{*}$ is a compact operator, it follows from Proposition \ref{compact} that $S_{LY}(T^{*})\subseteq\mathbb{T}$. Notice that for any $\lambda\in\mathbb{T}$,
$\lambda+T^{*}$ is Li-Yorke chaotic if and only if $1+\overline{\lambda} T^{*}$ is Li-Yorke chaotic. Then by Proposition \ref{sa}, $\lambda+T^{*}$ is Li-Yorke chaotic for any given $\lambda\in\mathbb{T}$. Therefore, $S_{LY}(T^{*})=\mathbb{T}$.

On the other hand, since $T$ is a compact operator, it follows from Proposition \ref{compact} that $S_{LY}(T)\subseteq\mathbb{T}$.
Given any $0\neq x=(x_1,x_2,\cdots)\in l^2(\mathbb{N})$. Let $x_k$ be the first non-zero coordinate of $x$. Then for any $\lambda\in\mathbb{T}$ and any $n\in \mathbb{N}$,
$$
\|T^nx\|\geq\|\lambda^nx_k\|=\|x_k\|.
$$
This implies that $\lambda+T$ is not Li-Yorke chaotic. Hence, $S_{LY}(T)=\emptyset$.
\end{proof}
According to Proposition \ref{basicp2}, Proposition \ref{shift} and Proposition \ref{ncshift}, one can get that the Li-Yorke chaos translation set of bounded linear operator could be open, or closed, or neither open nor closed.

\section{Kalisch operator}

First, let us review the Kalisch operator, it is defined on $\mathcal{L}^2[0,2\pi]$ by
$$
Sf(\theta)=e^{\mathbf{i}\theta}f(\theta)-\int\nolimits_{0}^{\theta}\mathbf{i}e^{\mathbf{i}t}f(t)dt.
$$

This operator was introduced by G. K. Kalisch in \cite{Kal}, which admits $\sigma(S)=\sigma_p(S)=\mathbb{T}$. For any $\alpha\in[0,2\pi]$, let ${\bf 1}_{[\alpha,2\pi]}$ be the function valued 1 on $[\alpha,2\pi]$ and valued 0 else. Then ${\bf 1}_{[\alpha,2\pi]}$ is an eigenvector of $S$ associated with eigenvalue $e^{\mathbf{i}\alpha}$, and the set
$\{{\bf 1}_{[\alpha,2\pi]};\alpha\in[0,2\pi]\}$ span the whole space $\mathcal{L}^2[0,2\pi]$. The Kalisch-type operators were used to study frequent hypercyclicity, Devaney chaos and so on \cite{Bad, Bay}. In this section, we will show that the Li-Yorke chaos translation set of Kalisch operator is a simple point set $\{0\}$.

\begin{theorem}\label{Kalisch}
$S_{LY}(S)=\{0\}$.
\end{theorem}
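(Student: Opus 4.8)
The theorem splits into two claims: $0\in S_{LY}(S)$, i.e. $S$ itself is Li-Yorke chaotic, and $\lambda\notin S_{LY}(S)$ for every $\lambda\neq0$. The computational backbone I would set up first is an intertwining relation with the Volterra operator $Vg(\theta)=\int_0^\theta g(t)\,dt$. A short integration by parts gives $SV=VM_{e^{i\theta}}$, where $M_\varphi$ denotes multiplication by $\varphi$; hence $(\lambda+S)V=VM_{\lambda+e^{i\theta}}$ and, since both sides are bounded and agree on a dense set, $(\lambda+S)^n(Vg)=V\big[(\lambda+e^{i\theta})^ng\big]$ for all $g\in\mathcal{L}^2[0,2\pi]$. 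Equivalently one proves by induction the explicit formula $(\lambda+S)^nf(\theta)=(\lambda+e^{i\theta})^nf(\theta)-ni\int_0^\theta e^{it}(\lambda+e^{it})^{n-1}f(t)\,dt$, which for $f\in C^1$ simplifies after integration by parts to $(\lambda+S)^nf(\theta)=(\lambda+1)^nf(0)+\int_0^\theta(\lambda+e^{it})^nf'(t)\,dt$. Note that $V$ is compact, injective, with dense range, so $\lambda+S$ is a quasiaffine transform of the \emph{normal} multiplier $M_{\lambda+e^{i\theta}}$; the crux of the theorem is that this normal model is powerless at $\lambda=0$ (unimodular symbol) yet decisive for $\lambda\neq0$.

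For $0\in S_{LY}(S)$ I would verify the Li-Yorke Chaos Criterion (Theorem \ref{LYequivC}). Take $X_0=\{f\in C^1[0,2\pi]:f(0)=0\}$. For such $f$ one has $f=Vf'$, so $S^nf=V[e^{in\theta}f']$; since $e^{in\theta}f'\to0$ weakly by Riemann--Lebesgue and $V$ is compact, $S^nf\to0$ in norm, giving (a). A Fourier-coefficient computation shows $\overline{\mathrm{span}(X_0)}=\mathcal{L}^2[0,2\pi]$. For (b) take the bounded sequence $a_n=e^{-in\theta}$; the explicit formula yields $S^na_n(\theta)=1-ni\theta$, whose norms grow like $n$, so $(S^na_n)$ is unbounded. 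Hence $S$ is Li-Yorke chaotic.

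For $\lambda\neq0$ I would write $r(\theta)=|\lambda+e^{i\theta}|$, observe $\max_\theta r(\theta)=1+|\lambda|>1$ attained at $\theta^*=\arg\lambda$, and note that $\lambda+\mathbb{T}$ crosses $\mathbb{T}$, cutting the parameter interval into arcs $A_+=\{r>1\}$ and $A_-=\{r<1\}$. The range $|\lambda|>2$ is immediate from Lemma \ref{spec}, since then $\sigma(\lambda+S)=\lambda+\mathbb{T}$ misses $\mathbb{T}$; the substantive range is $0<|\lambda|\le2$. The plan is to prove a dichotomy and then conclude there is no semi-irregular vector (Theorem \ref{LYequiv}). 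Using $(\lambda+S)^n(Vg)=V[(\lambda+e^{i\theta})^ng]$: if $g$ is supported in $\overline{A_-}$ then $|(\lambda+e^{i\theta})^ng|=r^n|g|\le|g|$ tends to $0$ almost everywhere, so by dominated convergence and boundedness of $V$ the orbit $\|(\lambda+S)^n(Vg)\|\to0$; whereas if $g$ carries positive mass on $A_+$ the orbit should be unbounded, in fact tend to $\infty$. In either case the orbit is not semi-irregular.

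The main obstacle is twofold, and both parts sit in the $\lambda\neq0$ blow-up half. First, proving $\|(\lambda+S)^n(Vg)\|\to\infty$ when $g$ meets $A_+$ is a genuine oscillatory-integral estimate: $\log(\lambda+e^{i\theta})$ has no critical point, so $V[(\lambda+e^{i\theta})^ng]$ is endpoint- and peak-dominated, and one must check that the growth $(1+|\lambda|)^n$ survives the rapid phase rotation (a Laplace-type analysis localized near $\theta^*$), while tracking the boundary contribution $(\lambda+1)^nf(0)$, i.e. mass at $\theta=0$ sitting at modulus $|\lambda+1|$. Second, the clean intertwining only governs orbits of vectors in the dense range of $V$ (essentially $H^1$ data), and because $\|(\lambda+S)^n\|$ grows, a naive approximation does not transfer the dichotomy to a general $f\in\mathcal{L}^2[0,2\pi]$. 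Closing this gap is where the real work lies: I would exploit the orthogonal reduction $M_{\lambda+e^{i\theta}}=M_+\oplus M_-$ of the normal model (spectrum outside and inside $\mathbb{T}$, hence an expanding and a contracting part) and transport the resulting stable/unstable splitting through the quasi-affinity $V$, so as to show that any vector whose orbit returns near $0$ infinitely often must be purely stable and therefore actually converge to $0$, never semi-irregular.
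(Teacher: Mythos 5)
Your positive half is correct and in fact more self-contained than the paper's: the induction formula you state is exactly the paper's identity (\ref{tn}), the intertwining $SV=VM_{e^{\mathbf{i}\theta}}$ with the Volterra operator checks out by one integration by parts, and your verification of the Li-Yorke Chaos Criterion (Theorem \ref{LYequivC}) with $X_0=\{f\in C^1: f(0)=0\}$ and $a_n=e^{-\mathbf{i}n\theta}$, where indeed $S^na_n(\theta)=1-n\mathbf{i}\theta$, is a complete proof that $0\in S_{LY}(S)$ --- the paper merely cites that $S$ is known to be Devaney chaotic. Likewise the reduction of $|\lambda|>2$ to Lemma \ref{spec} is fine.

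The genuine gap is the entire range $0<|\lambda|\le 2$, which is the substance of the theorem, and there your text is a program rather than a proof: the two steps you yourself flag (the Laplace-type blow-up estimate for $V[(\lambda+e^{\mathbf{i}t})^n g]$, and the transfer from the dense subspace $\mathrm{ran}\,V$ to all of $\mathcal{L}^2[0,2\pi]$) are precisely where all the work of the paper's proof lives, and neither is carried out. Worse, the proposed stable/unstable dichotomy is false as stated on all of $\mathcal{L}^2[0,2\pi]$: at the two intersection angles $a<b$ where $|\lambda+e^{\mathbf{i}\alpha}|=1$, the eigenvectors $\mathbf{1}_{[\alpha,2\pi]}$ (which are \emph{not} in $\mathrm{ran}\,V$) have orbits of constant nonzero norm --- a neutral part that is neither stable nor expanding --- so "orbit $\to 0$ or orbit $\to\infty$" fails, and your key claim that an orbit returning near $0$ infinitely often must converge to $0$ would have to handle bounded mixtures such as $f_2+c\,\mathbf{1}_{[b,2\pi]}$, exactly the delicate configuration. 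The paper's route avoids both of your obstacles: it splits $\mathcal{L}^2[0,2\pi]=H_1\oplus H_2\oplus H_3$ by the angles $a,b$ and exploits the lower-triangularity of $w+S$ with respect to this ordering, so that (Claim 1) the compression to a leading band $[0,s]$ has spectrum outside the closed unit disc and $P_0(w+S)^n=(P_0(w+S))^n$, giving norm blow-up whenever $f_1\neq 0$ by pure spectral radius arguments --- no oscillatory-integral analysis is needed; (Claim 2) on the contracting band $[a,b]$ the orbit is \emph{uniformly bounded}, via the explicit estimate $n\int_a^{a_0}|w+e^{\mathbf{i}t}|^{\,n-1}dt\le 3/(d\sqrt{9-(d+1)^2})$ on the transition arcs and $nG^{n-1}\to 0$ in the interior; and (Claim 3) on the trailing band a bounded subsequence of orbit norms forces $f_3$ to be constant on $[b,2\pi]$, i.e.\ a multiple of the neutral eigenvector. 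Finally, the paper concludes through \emph{irregular} vectors (Theorem \ref{LYequiv}(iv)): every orbit either tends to $\infty$ (hence has no subsequence to $0$) or is bounded (hence not irregular), which is exactly the logical move that neutralizes the unimodular eigendirections; your formulation via semi-irregular vectors demands strictly more than this and would need Claim 3-type rigidity anyway. Until you prove analogues of Claims 1--3 (and treat the tangency $|\lambda|=2$ and the position of the crossing angles relative to the cut $\theta=0$, the paper's six cases), the $\lambda\neq 0$ half remains open in your write-up.
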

\begin{proof}
First of all, we will show an identity of the iterations of the Kalisch operator. Given any $w\in \mathbb{C}$, $n\in \mathbb{N}$, and $f\in\mathcal{L}^2[0,2\pi]$. One can see for any $\theta\in[0,2\pi]$,
\begin{equation}\label{tn}
(w+S)^nf(\theta)=(w+e^{\mathbf{i}\theta})^nf(\theta)-n\int\nolimits_{0}^{\theta}\mathbf{i}e^{\mathbf{i}t}(w+e^{\mathbf{i}t})^{n-1}f(t)dt.
\end{equation}

Obviously, the equation holds for $n=1$. Suppose that it holds for $n=k$. Then for $n=k+1$ there is
\begin{align*}
&(w+S)^{k+1}f(\theta) \\
=&(w+S)^{k}(w+S)f(\theta) \\
=&(w+e^{\mathbf{i}\theta})^{k+1}f(\theta)-(w+e^{\mathbf{i}\theta})^{k}\int\nolimits_{0}^{\theta}\mathbf{i}e^{\mathbf{i}t}f(t)dt
-k\int\nolimits_{0}^{\theta}\mathbf{i}e^{\mathbf{i}t}(w+e^{\mathbf{i}t})^kf(t)dt \\
&+k\int\nolimits_{0}^{\theta}\mathbf{i}e^{\mathbf{i}t}(w+e^{\mathbf{i}t})^{k-1}(\int\nolimits_{0}^{t}\mathbf{i}e^{\mathbf{i}x}f(x)dx)dt \\
=&(w+e^{\mathbf{i}\theta})^{k+1}f(\theta)-(w+e^{\mathbf{i}\theta})^{k}\int\nolimits_{0}^{\theta}\mathbf{i}e^{\mathbf{i}t}f(t)dt
-k\int\nolimits_{0}^{\theta}\mathbf{i}e^{\mathbf{i}t}(w+e^{\mathbf{i}t})^kf(t)dt \\
&+(w+e^{\mathbf{i}\theta})^{k}\int\nolimits_{0}^{\theta}\mathbf{i}e^{\mathbf{i}x}f(x)dx-\int\nolimits_{0}^{\theta}\mathbf{i}e^{\mathbf{i}t}(w+e^{\mathbf{i}t})^kf(t)dt \\
=&(w+e^{\mathbf{i}\theta})^{k+1}f(\theta)-(k+1)\int\nolimits_{0}^{\theta}\mathbf{i}e^{\mathbf{i}t}(w+e^{\mathbf{i}t})^{k}f(t)dt.
\end{align*}
By the mathematical induction, the equation (\ref{tn}) holds for all $n\in \mathbb{N}$.

As well-known, the spectrum of $S$ is the unit circle. Then for $0\neq w\in \mathbb{C}$, the position relationship between the spectrum of $w+S$ and the unit circle must be one of the following cases (Fig. 1).

\hspace*{-2cm}\scalebox{0.33}[0.36]{\includegraphics{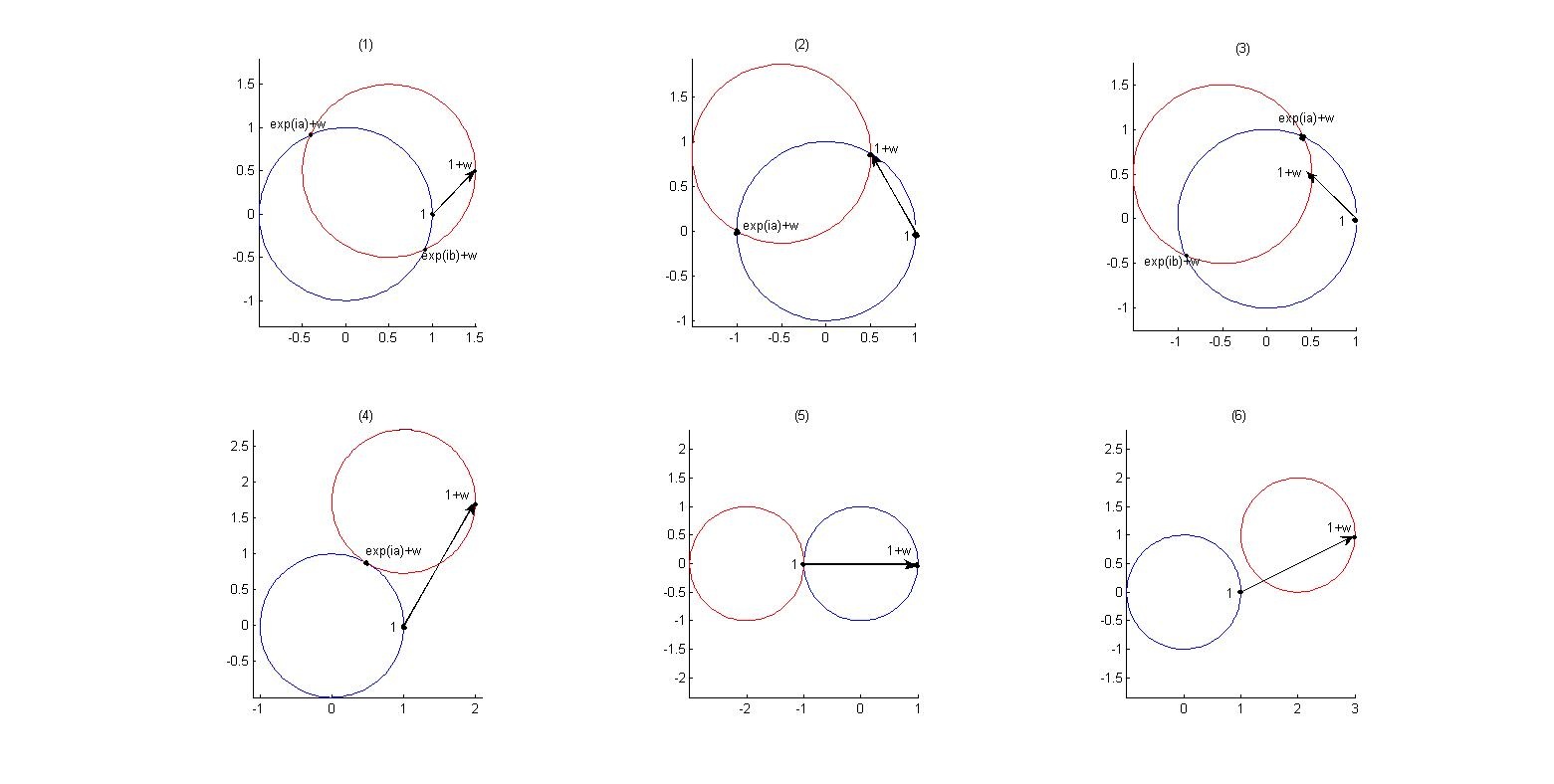}}
\begin{center}
{\small {\bf Fig. 1} }
\end{center}

It is easy to see that $w+S$ is not Li-Yorke chaotic if case (6) in Fig. 1 holds, since its spectrum is disjoint from the unit circle. We should prove that $w+S$ is not Li-Yorke chaotic when cases (1-5) hold, respectively. The methods are similar, then we will give details for case (1) as the example. Now, suppose case (1) holds. That means that $1+w$ is out of the unit disc, and the spectrum of $w+S$ intersects the unit circle at two distinct points, denoted by $w+e^{\mathbf{i}a}$ and $w+e^{\mathbf{i}b}$, $0<a<b<2\pi$. Let
$$
H_1=\{f\in\mathcal{L}^2[0,2\pi]; f(\theta)=0, \ for \ all \ \theta\in[a,2\pi]\},
$$
$$
H_2=\{f\in\mathcal{L}^2[0,2\pi]; f(\theta)=0, \ for \ all \ \theta\in[0,a]\cup[b,2\pi]\},
$$
$$
H_3=\{f\in\mathcal{L}^2[0,2\pi]; f(\theta)=0, \ for \ all \ \theta\in[0,b]\},
$$
and $P_i:\mathcal{L}^2[0,2\pi]\rightarrow H_i$ be the orthogonal projections, for $i=1,2,3$. Now, let us discuss the iterations of functions in $H_i$, respectively.

{\bf Claim 1.} For any $0\neq f\in H_1$, $\lim\nolimits_{n\rightarrow+\infty}P_1(w+S)^nf=\infty$.

Since $0\neq f\in H_1$, then there exists $s$ with $0<s<a$, such that $f(\theta)\neq0$ on a positive measure subset of $[0,s]$. Let
$$
H_0=\{f\in\mathcal{L}^2[0,2\pi]; f(\theta)=0, \ for \ all \ \theta\in[s,2\pi]\},
$$
and $P_0:\mathcal{L}^2[0,2\pi]\rightarrow H_0$ be the orthogonal projection. Notice that the spectrum of $P_0(w+S)$ on $H_0$ is $\{e^{\mathbf{i}t};t\in[0,s]\}$, which is out of the unit disk. Then $\lim\nolimits_{n\rightarrow+\infty}(P_0(w+S))^nf=\infty$. Following from the definition of Kalisch operator, one can see $(P_0(w+S))^n=P_0(w+S)^n$. Consequently,
$$
\lim\limits_{n\rightarrow+\infty}\|P_1(w+S)^nf\|\geq\lim\limits_{n\rightarrow+\infty}\|P_0(w+S)^nf\|=+\infty.
$$

{\bf Claim 2.} There exists $M>0$ such that for any $f\in H_2$, $\|(w+S)^nf\|\leq M\|f\|$ for all $n\in \mathbb{N}$.

Given any $f\in H_2$. Let $w=de^{\mathbf{i}\xi_0}$, where $d>0$ and $\xi\in[0,2\pi)$. Then for each $\theta\in[0,2\pi]$,
$$
|w+e^{\mathbf{i}\theta}|=\sqrt{(de^{-\mathbf{i}\xi_0}+e^{-\mathbf{i}\theta})(de^{\mathbf{i}\xi_0}+e^{\mathbf{i}\theta})}=\sqrt{d^2+1+2d\cos(\theta-\xi_0)}.
$$
Following from
$$
\sqrt{d^2+1+2d\cos(a-\xi_0)}=\sqrt{d^2+1+2d\cos(b-\xi_0)}=1,
$$
we have
$$
\cos(a-\xi_0)=\cos(b-\xi_0)=-\frac{d}{2}.
$$
Then, there exist $a<a_0<b_0<b$ such that
$$
-\frac{d+1}{3}\leq\cos(\theta-\xi_0)\leq -\frac{d}{2} \ \  \ \ for \ \theta\in[a,a_0],
$$
and
$$
-\frac{d+1}{3}\leq\cos(\theta-\xi_0)\leq -\frac{d}{2} \ \  \ \ for \ \theta\in[b_0,b].
$$
Moreover,
$$
\sqrt{1-\frac{(d+1)^2}{9}}\leq\sin(\theta-\xi_0)\leq \sqrt{1-\frac{d^2}{4}} \ \  \ \ for \ \theta\in[a,a_0],
$$
and
$$
-\sqrt{1-\frac{d^2}{4}}\leq\sin(\theta-\xi_0)\leq -\sqrt{1-\frac{(d+1)^2}{9}} \ \  \ \ for \ \theta\in[b_0,b].
$$
For $\theta\in[a,a_0]\cup[b_0,b]$, let
$$
G(\theta)=\sqrt{d^2+1+2d\cos(\theta-\xi_0)}
$$
and
$$
g(\theta)=\frac{-d\sin(\theta-\xi_0)}{\sqrt{d^2+1+2d\cos(\theta-\xi_0)}}.
$$
Then $G'(\theta)=g(\theta)$, $|G(\theta)|\leq1$ and
\begin{align*}
-g(\theta)&=\frac{d\sin(\theta-\xi_0)}{\sqrt{d^2+1+2d\cos(\theta-\xi_0)}} \\
&=|\frac{d\sin(\theta-\xi_0)}{\sqrt{d^2+1+2d\cos(\theta-\xi_0)}}| \\
&\geq|d\sin(\theta-\xi_0)| \\
&\geq d\sqrt{1-\frac{(d+1)^2}{9}} \\
&=\frac{d\sqrt{9-(d+1)^2}}{3}.
\end{align*}
Consequently, for every $n\in \mathbb{N}$,
\begin{align*}
&n\int\nolimits_{a}^{a_0}|\mathbf{i}e^{\mathbf{i}t}(w+e^{\mathbf{i}t})^{n-1}|dt  \\
=&\int\nolimits_{a}^{a_0}n(\sqrt{d^2+1+2d\cos(t-\xi_0)})^{n-1}dt \\
=&\int\nolimits_{a}^{a_0}nG(t)^{n-1}\cdot g(t)\cdot\frac{1}{g(t)}dt   \\
\leq& -\frac{3}{d\sqrt{9-(d+1)^2}}\int\nolimits_{a}^{a_0}nG(t)^{n-1}dG(t) \\
=& \frac{3}{d\sqrt{9-(d+1)^2}}((G(a))^{n}-((G(a_0))^{n}) \\
\leq& \frac{3}{d\sqrt{9-(d+1)^2}}.
\end{align*}
For any $f\in H_2$ and any $\theta\in[a,2\pi]$, one can see
\begin{align*}
&|\int\nolimits_{a}^{\theta}n\mathbf{i}e^{\mathbf{i}t}(w+e^{\mathbf{i}t})^{n-1}{\bf 1}_{[a,a_0]}f(t)dt|  \\
\leq&n\int\nolimits_{a}^{\theta}|\mathbf{i}e^{\mathbf{i}t}(w+e^{\mathbf{i}t})^{n-1}{\bf 1}_{[a,a_0]}|dt \cdot(\int\nolimits_{a}^{\theta}|f(t)|^2dt)^{\frac{1}{2}} \\
\leq&n\int\nolimits_{a}^{a_0}|\mathbf{i}e^{\mathbf{i}t}(w+e^{\mathbf{i}t})^{n-1}|dt \cdot(\int\nolimits_{0}^{2\pi}|f(t)|^2dt)^{\frac{1}{2}} \\
\leq& \frac{3}{d\sqrt{9-(d+1)^2}}\|f\|.
\end{align*}
Similarly, we could also prove
$$
n\int\nolimits_{b_0}^{b}|\mathbf{i}e^{\mathbf{i}t}(w+e^{\mathbf{i}t})^{n-1}|dt \leq \frac{3}{d\sqrt{9-(d+1)^2}},
$$
and for any $f\in H_2$ and any $\theta\in[b_0,2\pi]$,
$$
|\int\nolimits_{a}^{\theta}n\mathbf{i}e^{\mathbf{i}t}(w+e^{\mathbf{i}t})^{n-1}{\bf 1}_{[b_0,b]}f(t)dt| \leq \frac{3}{d\sqrt{9-(d+1)^2}}\|f\|.
$$

On the other hand, since for any $\theta\in[a_0,b_0]$,
$$
n|\mathbf{i}e^{\mathbf{i}\theta}(w+e^{\mathbf{i}\theta})^{n-1}|\leq n(\sqrt{d^2+1+\frac{2d(d+1)}{3}})^{n-1}\rightarrow0 \ \ \ \ as \ n\rightarrow+\infty,
$$
there  exists $B>0$ such that for all $n\geq N$,
$$
n\int\nolimits_{a_0}^{b_0}|\mathbf{i}e^{\mathbf{i}t}(w+e^{\mathbf{i}t})^{n-1}|dt \leq B.
$$
Then for any $f\in H_2$ and any $\theta\in[a_0,2\pi]$,
$$
|\int\nolimits_{a}^{\theta}n\mathbf{i}e^{\mathbf{i}t}(w+e^{\mathbf{i}t})^{n-1}{\bf 1}_{[a_0,b_0]}f(t)dt| \leq B\|f\|.
$$

Now, Choose $M=1+\sqrt{2\pi}(B+\frac{6}{d\sqrt{9-(d+1)^2}})$. Thus, for any $f\in H_2$ and all $n\geq N$,
\begin{align*}
&\|(w+S)^{n}f(\theta)\|  \\
\leq& \|(w+e^{\mathbf{i}\theta})^{n}f(\theta)\|+\|\int\nolimits_{a}^{\theta}n\mathbf{i}e^{\mathbf{i}t}(w+e^{\mathbf{i}t})^{n-1}f(t)dt\|  \\
\leq&\|\int\nolimits_{a}^{\theta}n\mathbf{i}e^{\mathbf{i}t}(w+e^{\mathbf{i}t})^{n-1}{\bf 1}_{[a,a_0]}f(t)dt\|+\|\int\nolimits_{a}^{\theta}n\mathbf{i}e^{\mathbf{i}t}(w+e^{\mathbf{i}t})^{n-1}{\bf 1}_{[a_0,b_0]}f(t)dt\|  \\
&+\|\int\nolimits_{a}^{\theta}n\mathbf{i}e^{\mathbf{i}t}(w+e^{\mathbf{i}t})^{n-1}{\bf 1}_{[b_0,b]}f(t)dt\|+\|f\| \\
\leq&\|f\|+\frac{3}{d\sqrt{9-(d+1)^2}}\|f\|\sqrt{2\pi-a}+B\|f\|\sqrt{2\pi-a_0} \\
&+\frac{3}{d\sqrt{9-(d+1)^2}}\|f\|\sqrt{2\pi-b_0} \\
\leq&(1+\sqrt{2\pi}(B+\frac{6}{d\sqrt{9-(d+1)^2}}))\|f\| \\
=&M\|f\|.
\end{align*}

{\bf Claim 3.} For any $f\in H_3$ , if $f(\theta)$ is not constant on $[b,2\pi]$, then $\lim\nolimits_{n\rightarrow\infty}(w+S)^nf=\infty$.

It suffices to prove that for $f\in H_3$, if there exist $A>0$ and an increasing sequence of positive integers $\{n_k\}_{k=1}^{+\infty}$ such that $\|(w+S)^{n_k}f\|\leq A$, then $f(\theta)$ is constant on $[b,2\pi]$. Given arbitrary $\delta\in(b,2\pi)$.  Let
$$
H_4=\{f\in\mathcal{L}^2[0,2\pi]; f(\theta)=0, \ for \ all \ \theta\in[0,\delta]\},
$$
and $P_4:\mathcal{L}^2[0,2\pi]\rightarrow H_4$ be the orthogonal projection. Notice that the spectrum of $P_4(w+S)$ on $H_4$ is $\{e^{\mathbf{i}t};t\in[\delta,2\pi]\}$, which is out of the unit disk. Write $\rho=\|w+e^{\mathbf{i}\delta}\|$. Then for $r$ with $1<r<\rho$, there exists $N>0$ such that for $n\geq N$, $\|(P_4(w+S))^nf\|\geq r^n\|f\|$. Let
$$
C_k=\int\nolimits_{b}^{\delta}\mathbf{i}e^{\mathbf{i}t}(w+S)^{n_k-1}f(t)dt.
$$
Since
$$
P_4(w+S)^{n_k}f=(w+S)^{n_k}P_4f-{\bf 1}_{[\delta,2\pi]}C_k
$$
and
$$
{\bf 1}_{[\delta,2\pi]}C_k=(w+S)^{n_k}\frac{{\bf 1}_{[\delta,2\pi]}C_k}{(w+e^{\mathbf{i}\delta})^{n_k}},
$$
then for $n_k\geq n$
\begin{align*}
r^{n_k}\|P_4f-\frac{{\bf 1}_{[\delta,2\pi]}C_k}{(w+e^{\mathbf{i}\delta})^{n_k}}\|&\leq\|(w+S)^{n_k}(P_4f-\frac{{\bf 1}_{[\delta,2\pi]}C_k}{(w+e^{\mathbf{i}\delta})^{n_k}})\| \\
&=\|(w+S)^{n_k}P_4f-{\bf 1}_{[\delta,2\pi]}C_k\| \\
&=\|P_4(w+S)^{n_k}f\| \\
&\leq A.
\end{align*}
Consequently,
$$
\|P_4f-\frac{{\bf 1}_{[\delta,2\pi]}C_k}{(w+e^{\mathbf{i}\delta})^{n_k}}\|\leq\frac{A}{r^{n_k}}\rightarrow0, \ \ \ as \ k\rightarrow+\infty.
$$
Notice that if $f(\theta)$ is not constant on $[b,2\pi]$,  there exists $\epsilon>0$ such that for any
$C\in \mathbb{C}$,
$$
\|P_4f-{\bf 1}_{[\delta,2\pi]}C\|>\epsilon.
$$
Therefore, $f(\theta)$ is constant on $[\delta,2\pi]$. Furthermore, by the arbitrariness of $\delta$, $f(\theta)$ is constant on $[b,2\pi]$.

Now, let us prove the main result. For any $f\in\mathcal{L}^2[0,2\pi]$, put $f_i=P_if$ for $i=1,2,3$. Then $f=f_1+f_2+f_3$. If $f_1\neq0$, by Claim 1, we obtain $\lim\nolimits_{n\rightarrow+\infty}(w+S)^nf=\infty$ and hence $w+S$ is not Li-Yorke chaotic. if $f_1=0$ and $f_3$ is not constant on $[b,2\pi]$, then by Claim 2 and Claim 3, one can see $\lim\nolimits_{n\rightarrow+\infty}(w+S)^nf=\infty$ and hence $w+S$ is not Li-Yorke chaotic. If $f_1=0$ and $f_3$ is constant on $[b,2\pi]$, by Claim 2 and $(w+S)^nf_3=(w+e^{\mathbf{i}b})^nf_3$, we have $\{\|(w+S)^nf\|\}_{n=0}^{+\infty}$ is bounded and hence $w+S$ is not Li-Yorke chaotic.

Notice that $S$ is Li-Yorke (Devaney) chaotic. Thus after similar discussions for other cases, we obtain $S_{LY}(S)=\{0\}$.
\end{proof}

\end{document}